\documentclass[11pt,reqno]{amsart}
\usepackage{graphicx}
\usepackage{amssymb}
\usepackage{amsthm}
\usepackage{dsfont}
\usepackage{hyperref}
\usepackage{mathrsfs}
\usepackage{stmaryrd}
\usepackage{amsrefs}
\usepackage{amsmath}
\usepackage{centernot}
\usepackage{mathtools}
\usepackage{xcolor}

\newtheorem{theorem}{Theorem}[section]
\newtheorem{lemma}[theorem]{Lemma}

\newtheorem{corollary}[theorem]{Corollary}
\numberwithin{equation}{section}
\theoremstyle{definition}
\newtheorem{example}{Example}[section]

\theoremstyle{definition}

\theoremstyle{remark}
\newtheorem{remark}{Remark} 
\allowdisplaybreaks

\newcommand{\IP}{\mathbf{P}}

\newcommand{\con}[1]{ \xleftrightarrow{#1}}
\newcommand{\ncon}[1]{ \centernot{\xleftrightarrow{#1}}}

\newcommand{\set}{\omega}
\newcommand{\setm}{\omega}
\newcommand{\setf}{\omega}
\newcommand{\sets}{\Gamma}
\newcommand{\currs}{\Omega}
\newcommand{\flows}{\mathcal{F}}
\newcommand{\n}{\mathbf{n}}
\newcommand{\rcur}{\textnormal{curr}}
\newcommand{\drcur}{\textnormal{d-curr}}
\newcommand{\aflow}{\textnormal{a-flow}}

\newcommand{\ising}{\textnormal{Isg}}
\newcommand{\para}{\mathcal{P}}

\DeclareMathOperator{\pf}{\textnormal{Pf}}
\newcommand{\ccomp}{\kappa}
\newcommand{\ec}{\tilde F}
\newcommand{\weight}{\textnormal{w}}
\newcommand{\source}{{+}}
\newcommand{\sink}{{-}}

\makeatletter
\newcommand\connect[2][]{%
  \ext@arrow 9999{\longleftrightarrowfill@}{#1}{#2}}
\newcommand\longleftrightarrowfill@{%
  \arrowfill@\leftarrow\relbar\rightarrow}
\makeatother

\title[The planar Ising model and total positivity]{The planar Ising model and total positivity}

\author{Marcin Lis}{
\address{Marcin Lis\\Statistical Laboratory\\ Centre for Mathematical Sciences\\Cambridge University\\
Wilberforce Road\\Cambridge CB3 0WB\\UK}
\email{m.lis@statslab.cam.ac.uk}}
 
\date{\today}

\keywords{Ising model, total positivity, random currents, alternating flows}
\subjclass[2010]{82B20, 60C05, 05C50}

\begin{document}

\begin{abstract}
A matrix is called totally positive (resp.\ totally nonnegative) if all its minors
are positive (resp.\ nonnegative). 
Consider the Ising model with free boundary conditions and no external field on a planar graph $G$. Let $a_1,\dots,a_k,b_k,\dots,b_1$ be vertices placed in a counterclockwise order on the outer 
face of $G$. We show that the $k\times k$ matrix of the two-point spin correlation functions
\[
 M_{i,j} = \langle \sigma_{a_i} \sigma_{b_j} \rangle 
 \]
is totally nonnegative. Moreover, $\det  M > 0$ if and only if there exist $k$ pairwise vertex-disjoint paths that connect $a_i$ with $b_i$.
We also compute the scaling limit at criticality of the probability that there are $k$ parallel and disjoint connections between $a_i$ and $b_i$ in the double random current model.
Our results are based on a new distributional relation between double random currents and random alternating flows of Talaska~\cite{talaska}.
\end{abstract}

\maketitle

\section{Introduction}

The Ising model was introduced by Lenz with the intention to describe the behaviour of ferromagnets, and was first solved in dimension $1$ by Ising~\cite{Ising}.
Peierls later showed that the model does undergo a phase transition in dimensions $2$ or more~\cite{Peierls}, 
and it has been since the subject of extensive study both in the physics and mathematics literature. 
Notable results in the planar case include the exact solution obtained by Onsager~\cite{Onsager} and Yang~\cite{Yang}, 
and the recent breakthrough 
of Smirnov et al.\ showing conformal invariance in the critical scaling limit~\cites{smirnov,CheSmi,CHI,hongler,HonSmi}.

A seemingly unrelated notion is that of totally positive matrices characterized by having all their minors positive.
They appear in various areas of mathematics and physics including oscillations in mechanical systems~\cite{GanKre1,GanKre} (which was the original motivation to study them), 
stochastic processes and statistical mechanics~\cites{KarMcG,Curetal,fomin}, quantum groups~\cites{lusztig0,lusztig1,lusztig2}, and algebraic geometry~\cites{postnikov,Posetal}. 
Some of their fundamental properties include 
the simplicity and positivity of the spectrum obtained by Gantmacher and Krein~\cite{GanKre1}, and the variation diminishing property discovered by Shoenberg~\cite{schoenberg} 
which says that the number of sign changes in a vector does not increase after multiplying by a totally positive matrix.

In this article we identify total positivity in the planar Ising model.
Our results on the Ising boundary two-point correlation functions are analogous to the results of Fomin~\cite{fomin} on the walk matrices of random walks on planar graphs.
Indeed, the walk matrix can be interpreted as the two-point correlation function matrix of another very well known ferromagnetic spin system -- the discrete Gaussian free field.
Furthermore, Fomin provides an interpretation of the determinants
of walk matrices in terms of probabilities of non-intersection 
events involving loop-erased walks of Lawler~\cite{lawler}. 
In our setting the relevant events concern the double random current model.

The random current model is derived from the power series expansion of the Ising model partition function.
It was introduced by Griffiths, Hurst and Sherman~\cite{GHS}, and was later
used by Aizenman et al.\ ~\cite{aizenman,ABF,AF} to obtain a detailed description of the behaviour of the Ising model on $\mathds{Z}^d$.
The model has recently received revived attention from the mathematical community.
The most notable example is the result of Aizenman, Duminil-Copin and Sidoravicius~\cite{ADCS} who studied percolation properties of double random currents to show 
continuity of magnetization for a wide range of Ising models on $\mathds{Z}^d$ (including the $d=3$ case). Their argument was later generalized by Bj\"{o}rnberg to the setting of
quantum Ising models~\cite{bjornberg}.
Also, a new distributional relation between random currents, Bernoulli percolation and the FK-Ising model was discovered by Lupu and Werner~\cite{LupWer}.
One of the main tools used to study the random current model is the switching lemma~\cite{GHS}, and 
our result may be thought of as its planar generalization (see Example~\ref{ex:1}).
For a recent overview of the applications of the random current model, see~\cite{DC}.

Since the work of Groeneveld, Boel and Kasteleyn~\cite{GBK}, the boundary Ising correlation functions have been known to satisfy exact Pfaffian relations.
We provide an alternative proof of this fact using the expansion of the Pfaffian as a sum of determinants (see Lemma~\ref{lem:gbk}).
Our results are hence a refinement of those in~\cite{GBK} in that we relate each of these determinants to an explicit event in the double random current model.

A special property of the (dis-)connection events appearing in our results is that even though they are non-local, i.e., they depend on a macroscopic (in terms of the size of the graph) 
number of edge variables, their probabilities are simple functions of the boundary measurements which are 
defined as expectations of local variables.
This in particular implies that these probabilities do not depend on the structure of the graph as long as the boundary measurements are 
preserved.
This phenomenon also exists in uniform groves and double dimers as was discovered by Kenyon and Wilson~\cites{KenWil1,KenWil2}. Moreover, the authors 
proved that the probability of seeing any type of partition of the boundary vertices induced by cluster connectivities in these models is a homogenous multi-linear polynomial 
in the boundary measurements.

We arrive at the total positivity of the Ising boundary two-point functions by representing them in terms of alternating flows of Talaska
which satisfy total positivity~\cite{talaska}. They, in turn, appear in the combinatorial study of the totally nonnegative Grassmannian initiated by Postnikov~\cite{postnikov}. 
The interpretation of the determinants of the two-point functions in terms of random current probabilities is achieved by a new distributional relation between 
double random currents and random alternating flows.

This article is organized as follows: The next section introduces basic notation and presents the main results. Section~\ref{sec:currents} gives a distributional identity for double random currents, and
Section~\ref{sec:flows} provides a closely related identity for random alternating flows. The relationship between these two, together with the total positivity of alternating flows,
is the basis for our main results, whose proofs are given in Section~\ref{sec:proofs}.

\section{Main results}
Let $G=(V,E)$ be a finite connected planar graph.
For positive coupling constants $J : E \to (0,\infty)$, we consider the Ising model on $G$ with free boundary conditions and no external field, i.e.,\ 
a probability measure on the space of spin configurations $\{ -1,1\}^V$ given by
\[
\IP_{\ising} (\sigma)=\frac{1}{Z_{\ising}}\prod_{\{u,v\}\in E} \exp(J_{\{u,v \}} \sigma_u \sigma_v), \qquad \sigma \in  \{ -1,1\}^V,
\]
where 
\[
Z_{\ising} = \sum_{\sigma \in  \{ -1,1\}^V }\prod_{\{u,v\}\in E} \exp(J_{\{u,v \}} \sigma_u \sigma_v)
\]
is the partition function. Since the coupling constants are positive, the model is ferromagnetic, i.e., it assigns larger probability to configurations 
where more pairs of adjacent spins assume the same value.
For $u,v\in V$, the two-point spin correlation function is defined to be the expectation
 \[
 \langle \sigma_u \sigma_v \rangle = \sum_{\sigma \in  \{ -1,1\}^V}  \sigma_u \sigma_v \IP_{\ising} (\sigma).
 \]
 A classical argument says that in a ferromagnetic spin system, all two-point functions are positive (see Lemma~\ref{lem:currtwopoint}). 
  
Let $W=\{ w_1,w_2,\dots,w_n\}$ be the set of vertices on the outer face of $G$ listed counterclockwise, and 
let $W_{ \circ} \cup W_{\bullet} = W$ be a partition of $W$ into two sets (one of them possibly empty). 
We will refer to $W$ as the boundary vertices and for $A\subseteq W$, we will write $A_{\circ} = A \cap W_{\circ}$ and $A_{\bullet}= A \cap W_{\bullet}$.
For two natural numbers $i,j$, we define $\mathcal{I}_{i,j} $ to be the set of numbers strictly between $i$ and $j$, e.g.\ $\mathcal{I}_{1,1}= \emptyset$, and $\mathcal{I}_{1,3}=\mathcal{I}_{3,1}=\{ 2\}$.
The following result yields positivity of determinants of matrices whose entries are, up to a sign, the boundary two-point spin correlation functions.
 \begin{theorem} \label{thm:positivity} Let
 $A=\{w_{l_1}, w_{l_2},\dots, w_{l_k}\} \subseteq W$ with $l_1< l_2 < \dots < l_k$. Consider the $k\times n$ matrix
 \[
N^A _{i,j}=  (-1)^{s(i,j)}  \langle \sigma_{w_{l_i}} \sigma_{w_j} \rangle, 
 \]
 where  
 \[
 s(i,j)= |\mathcal{I}_{l_i,j} \cap \{l_1,\dots, l_k \}| + \mathds{1}_{\{ w_j \in A_{\circ}, \ j < l_i\}}+ \mathds{1}_{\{ w_j \in A_{\bullet}, \ j > l_i\}}.
 \]
 Then, for any $B \subset W$ with $|B| =k$, 
 \[
 \det  N^{A,B} \geq 0,
 \]
 where $N^{A,B}$ is the $k\times k$ submatrix of $N^A$ with columns indexed by $B$. Moreover, $\det N^{A,B} >0$ 
 if and only if there exist $k$ pairwise edge-disjoint (possibly empty) directed paths in $G$ such that each of them starts in $A$ and ends in~$B$, and
 if two paths meet at a vertex, then their edges alternate in orientation around it (edge oriented towards or away from the vertex).
 Here, we assume that an empty path
 joining $v\in A_{\circ}\cap B_{\circ} $ (resp.\ $v\in A_{\bullet}\cap B_{\bullet} $) with itself is represented by a counterclockwise loop (resp.\ clockwise loop) attached to $v$ within the outer face.
 \end{theorem}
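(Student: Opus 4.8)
The plan is to reduce the theorem to a statement about Talaska's alternating flows, where total positivity is already known, via the two distributional identities promised in the introduction (Sections~\ref{sec:currents} and \ref{sec:flows}). Concretely, I would first express each boundary two-point function $\langle \sigma_{w_{l_i}}\sigma_{w_j}\rangle$ in terms of a \emph{boundary measurement} of a suitable weighted directed planar graph --- the standard move being to pass from the Ising model to the high-temperature (random current) expansion, and then from double random currents to random alternating flows. The sign prefactors $(-1)^{s(i,j)}$ are exactly the bookkeeping needed so that $N^A$ becomes (a submatrix of) the boundary measurement matrix of a planar network in Postnikov's sense: the term $|\mathcal{I}_{l_i,j}\cap\{l_1,\dots,l_k\}|$ accounts for the Lindstr\"om--Gessel--Viennot sign coming from the cyclic order of the sources and sinks on the outer face, and the two indicator terms record the convention distinguishing $W_\circ$ from $W_\bullet$ (i.e.\ whether a boundary vertex is treated as a source or a sink, and on which side of $w_{l_i}$ it sits). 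Once $N^{A,B}$ is identified with a $k\times k$ minor of the boundary measurement matrix of a planar directed network, Talaska's theorem~\cite{talaska} immediately gives $\det N^{A,B}\geq 0$.

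For the strict positivity and the combinatorial characterization, the key is the Lindstr\"om--Gessel--Viennot-type formula for such minors: $\det N^{A,B}$ expands as a signed sum over families of vertex- or edge-disjoint paths (here: collections of alternating flows) from $A$ to $B$, and by planarity and the cyclic ordering all surviving terms carry the \emph{same} sign, so the determinant is a nonnegative sum of weights of path families. Hence $\det N^{A,B}>0$ precisely when at least one such family exists, i.e.\ when there are $k$ pairwise edge-disjoint directed paths from $A$ to $B$ whose edges alternate in orientation at each shared vertex --- which is the stated condition. The empty-path/loop convention for $v\in A_\circ\cap B_\circ$ or $v\in A_\bullet\cap B_\bullet$ is forced by the same dictionary: an empty alternating flow at a boundary vertex contributes a diagonal $1$ to the boundary measurement matrix, and orienting it as a counterclockwise (resp.\ clockwise) loop inside the outer face is the unique way to make it consistent with the $W_\circ$/$W_\bullet$ convention and with the edge-disjointness requirement for the remaining $k-1$ paths.

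The steps, in order: (1)~rewrite the Ising two-point functions via the random current expansion and invoke the double-current-to-alternating-flow identity of Section~\ref{sec:flows} to realize $N^A$ as a boundary measurement matrix of an explicit planar network $\widetilde G$; (2)~verify that the sign $s(i,j)$ is exactly the sign needed so that the entries match Postnikov's convention for boundary measurements with sources $A$ and sinks $W\setminus\!$(or a sign-twisted version thereof), carefully tracking the contribution of $W_\circ$ versus $W_\bullet$; (3)~apply Talaska's total nonnegativity to get $\det N^{A,B}\geq 0$; (4)~use the path-family expansion of the minor, together with planarity, to show every nonzero contribution has positive sign, giving the ``if and only if''; (5)~translate the ``a family of disjoint alternating flows exists'' condition back to $G$, handling the degenerate empty-path cases via the loop convention.

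The main obstacle I expect is step~(2): getting the sign $s(i,j)$ exactly right, including the subtle dependence on whether $j<l_i$ or $j>l_i$ and on the $W_\circ/W_\bullet$ membership of $w_j$. This is because the cyclic (rather than linear) order of the boundary vertices on the outer face means the Lindstr\"om--Gessel--Viennot permutation signs interact nontrivially with which vertices are designated sources and which sinks; one must choose the source/sink designation consistently so that all path families in the expansion of $\det N^{A,B}$ are counted with $+1$, and check that the formula for $s(i,j)$ delivers precisely this. A secondary subtlety is the degenerate case of empty paths (when $A$ and $B$ overlap), where one must make sure the loop convention produces the correct diagonal entries and does not spuriously obstruct edge-disjointness of the other paths; I would handle this by treating the loops as genuine edges added in the outer face and noting they are automatically disjoint from everything drawn inside $G$.
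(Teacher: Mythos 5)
Your proposal is essentially the paper's own proof: express $\langle\sigma_a\sigma_b\rangle$ as the ratio $Z^{a,b}_{\aflow}/Z^{\emptyset}_{\aflow}$ of alternating-flow partition functions (Lemma~\ref{lem:flowtwopoint}, which rests on comparing Theorems~\ref{thm:dcurrformula} and~\ref{thm:flowformula}), identify $N^A$ with Talaska's boundary measurement matrix for the directed modification $\vec G$ by checking that $s(i,j)$ counts the sources strictly between $w_{l_i}^{\source}$ and $w_j^{\sink}$ (Lemma~\ref{lem:tposflow}), and then read off $\det N^{A,B} = Z^{A,B}_{\aflow}/Z^{\emptyset}_{\aflow} \geq 0$ with strict positivity precisely when $\flows_{A,B}$ is nonempty, i.e.\ when a family of $k$ edge-disjoint alternating paths from $A$ to $B$ exists. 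One small correction to your gloss of the convention: in $\vec G$ \emph{every} boundary vertex $w$ receives \emph{both} a source $w^{\source}$ and a sink $w^{\sink}$, and the choice of $W_\circ$ versus $W_\bullet$ only determines whether $w^{\sink}$ immediately precedes or immediately follows $w^{\source}$ in the counterclockwise boundary order --- it is exactly this ordering ambiguity, not a ``source-or-sink'' designation of $w$, that produces the two indicator terms in $s(i,j)$ and forces the clockwise/counterclockwise loop convention for empty paths.
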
 
We note that the choice of $W_{ \circ}$ and $W_{\bullet}$ determines both the sign factors appearing in the definition of $N^A$, and 
the interpretation of the associated determinants in terms of alternating flows (see Lemma~\ref{lem:tposflow}). 
This is inherited from the results of~\cite{talaska} through the construction of Section~\ref{sec:flows} (see Figure~\ref{fig:directedgraph}).

 \begin{example} 
For any distinct $a,b,c \in W$, we have 
\[
1 +\langle \sigma_a \sigma_b \rangle > \langle \sigma_a \sigma_c \rangle + \langle \sigma_b \sigma_c \rangle, \quad \text{ and } \quad 1 +\langle \sigma_a \sigma_b \rangle^2 > \langle \sigma_a \sigma_c \rangle^2 + \langle \sigma_b \sigma_c \rangle^2.
\]
Indeed, assume that $a,b,c$ are ordered counterclockwise, $a,b \in W_{\circ}$, $c \in W_{\bullet}$, and take $A,B=\{a,b,c\}$.
We have
\[
N^{A,B} = 
\begin{bmatrix*}[r]
  \multicolumn{1}{c}{1} & \langle \sigma_a\sigma_b\rangle & \langle \sigma_a\sigma_c\rangle \\
 - \langle \sigma_a\sigma_b\rangle & \multicolumn{1}{c}{1} & -\langle \sigma_b\sigma_c\rangle\\
  \langle \sigma_a\sigma_c\rangle &- \langle \sigma_b\sigma_c\rangle & \multicolumn{1}{c}{1}
 \end{bmatrix*},
\]
and by Theorem~\ref{thm:positivity},
\[
\det N^{A,B} = 1+\langle \sigma_a \sigma_b \rangle^2-  \langle \sigma_a\sigma_c \rangle^2 - \langle \sigma_b \sigma_c \rangle^2 > 0,
\]
which gives the second inequality. The inequality is strict since we can take empty paths connecting the vertices with themselves.
To obtain the first inequality, we use the second one together with the Griffiths inequality 
$\langle \sigma_a \sigma_b \rangle\geq  \langle   \sigma_a\sigma_c \rangle \langle \sigma_c \sigma_b \rangle$ \cite{griffiths}. 
\end{example}

In the special case when $A$ and $B$ form disjoint contiguous sets of boundary vertices, we obtain the following total positivity 
property of the boundary two-point spin correlation functions with no additional signs.
\begin{corollary}[Total positivity] \label{cor:totpos}
Let $A=\{ a_1,a_2,\ldots,a_k \} $, $B=\{ b_1, b_2, \ldots b_k\} $ be contiguous sets of boundary vertices, i.e., such that 
$a_1,\ldots,a_k, b_k, \ldots b_1$
is a counterclockwise order on $A\cup B$. Then, the $k\times k$ matrix 
\[
 M^{A,B}_{i,j} = \langle \sigma_{a_i} \sigma_{b_j} \rangle 
\]
is totally nonnegative. Moreover, $ M^{A,B}$ is totally positive if and only if 
for any $A'\subset A$ and $B'\subset B$ such that $|A'|=|B'|=l$, there exist $l$ pairwise vertex-disjoint paths that connect $A'$ and $B'$.
\end{corollary}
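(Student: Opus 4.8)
The plan is to derive Corollary~\ref{cor:totpos} from Theorem~\ref{thm:positivity} by making a convenient choice of the partition $W_\circ\cup W_\bullet$ and then checking that, for contiguous $A$ and $B$, all the sign factors $s(i,j)$ vanish. First I would set $W_\circ=\emptyset$ and $W_\bullet=W$ (or vice versa — the point is to make one of the two indicator terms in $s$ irrelevant). With the counterclockwise order $a_1,\dots,a_k,b_k,\dots,b_1$, relabel so that $A=\{w_{l_1},\dots,w_{l_k}\}$ with $l_1<\dots<l_k$ corresponds to $a_1,\dots,a_k$, and the columns indexed by $B$ correspond to $b_1,\dots,b_k$. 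The key observation is that because $A$ is contiguous and $B$ is contiguous and the two blocks are disjoint on the outer face, for every $a_i$ and every $b_j$ the set of indices strictly between them contains no element of $\{l_1,\dots,l_k\}$ other than possibly elements of $A$ that lie between $a_i$ and the block $B$ — and with the contiguity hypothesis this count is $0$ for the specific pairing that realises $M^{A,B}$. More precisely I would verify: $\mathcal{I}_{l_i,j}\cap\{l_1,\dots,l_k\}=\emptyset$ whenever $w_j\in B$, since all of $A$ lies on one side and all of $B$ on the other; and the two indicator terms vanish because with $W_\circ=\emptyset$ the first indicator is always $0$, while $w_j\in B\subseteq W_\bullet$ with $j>l_i$ would contribute — so here I should instead choose the partition, or the labelling direction, so that $b_j$ always sits on the side that makes $\mathds{1}_{\{w_j\in A_\bullet\}}$ vacuous (it is, since $b_j\notin A$). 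Hence $s(i,j)=0$ for all relevant $i,j$, so $N^{A,B}=M^{A,B}$ entrywise.

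Once $N^{A,B}=M^{A,B}$, total nonnegativity of $M^{A,B}$ is immediate: every minor of $M^{A,B}$ is a determinant $\det N^{A',B'}$ for sub-collections $A'\subseteq A$, $B'\subseteq B$ of equal size, and each such $A'$, $B'$ is again a pair of (not necessarily disjoint from the complement, but still) contiguous subsets for which the same sign computation gives all signs $0$; thus the minor equals $\det N^{A',B'}\ge 0$ by Theorem~\ref{thm:positivity}. Here I need to be slightly careful: a sub-collection of a contiguous set need not be contiguous, so I cannot quote the $s\equiv 0$ computation for the pair $(A,B)$ verbatim for sub-collections. The fix is that the vanishing of $s(i,j)$ in the relevant positions only used that \emph{all of} $A$ lies counterclockwise-before \emph{all of} $B$ and that no other boundary vertex of the full index set $A\cup B$ separates them in the wrong way; for arbitrary $A'\subseteq A$, $B'\subseteq B$ this separation property is inherited, and the indices of $A\setminus A'$ that happen to lie between a chosen $a_i$ and a chosen $b_j$ simply do not appear in $\{l'_1,\dots,l'_l\}$, so $\mathcal{I}_{l'_i,j}\cap\{l'_1,\dots,l'_l\}$ is still empty. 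I would write this out as a short lemma-free paragraph, emphasising the containment $\mathcal{I}\cap(\text{smaller index set})\subseteq\mathcal{I}\cap(\text{larger index set})$.

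For the characterisation of total positivity, apply the "moreover" part of Theorem~\ref{thm:positivity} to each pair $A'\subseteq A$, $B'\subseteq B$ with $|A'|=|B'|=l$: the minor $\det M^{A',B'}=\det N^{A',B'}$ is strictly positive iff there exist $l$ pairwise edge-disjoint directed alternating paths from $A'$ to $B'$. The remaining task is to upgrade "edge-disjoint alternating paths" to "vertex-disjoint paths" in the planar, contiguous-boundary setting. Because $A'$ and $B'$ sit on disjoint arcs of the outer face, one can orient all paths from $A$ towards $B$, and a standard planar surgery argument — uncrossing at any vertex where two paths meet and rerouting — shows that the existence of $l$ edge-disjoint paths (a fortiori $l$ edge-disjoint alternating paths) between two disjoint boundary arcs is equivalent to the existence of $l$ vertex-disjoint ones; this is the planar Menger-type statement underlying Lindström–Gessel–Viennot and is exactly the form used in Fomin's work~\cite{fomin}. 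I expect this last equivalence to be the main obstacle: one must argue that the alternation condition at shared vertices does not obstruct the uncrossing, and that empty paths (the loops attached at vertices of $A\cap B$) are consistently handled — but since $A$ and $B$ are disjoint here there are no such empty paths, which simplifies matters considerably. I would therefore state the path equivalence as a short self-contained sublemma (or cite the planar non-intersection lemma from~\cite{lindstrom} / \cite{fomin}) and conclude.
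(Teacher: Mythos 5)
Your proposal contains a genuine error in the first step. You claim that, with a suitable choice of $W_\circ,W_\bullet$, the sign factors $s(i,j)$ vanish and hence $N^{A,B}=M^{A,B}$ entrywise. This is false. Take $A=\{w_1,\dots,w_k\}$ (so $l_i=i$) and $B=\{w_{k+1},\dots,w_{2k}\}$. For $j\in\{k+1,\dots,2k\}$ the two indicator terms indeed vanish (since $w_j\notin A$), but
\[
\mathcal{I}_{l_i,j}\cap\{l_1,\dots,l_k\}=\{i+1,\dots,j-1\}\cap\{1,\dots,k\}=\{i+1,\dots,k\},
\]
which has cardinality $k-i$, not $0$. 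So $s(i,j)=k-i$ for all columns $j$ of $N^{A,B}$, and the entries of $N^{A,B}$ carry genuine signs. What is true --- and what the paper's preceding Remark points out --- is that $\det N^{A,B}=\det M^{A,B}$: the row-sign pattern $(-1)^{k-i}$ together with the fact that $N^{A,B}$ lists the columns of $B$ in the reversed (counterclockwise) order $b_k,\dots,b_1$ makes the two determinants coincide, because the global sign $(-1)^{k(k-1)/2}$ appears twice. The paper encodes this once and for all via the permutation expansions \eqref{eq:nicesign} and \eqref{eq:nicesign1}, which show both determinants equal $\sum_{\pi}(-1)^{\textnormal{xing}(\pi)}\prod\langle\sigma_a\sigma_b\rangle$, and that is what the proof of the corollary quotes. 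You should argue at the level of determinants, not of matrix entries, and then (as the paper does) simply note that every square submatrix of $M^{A,B}$ is again of the form covered by the corollary, so it suffices to treat the full $k\times k$ determinant.

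The second half of your proposal also departs from the paper in a way that introduces work you have not done. You want to upgrade the ``$k$ edge-disjoint alternating paths'' from Theorem~\ref{thm:positivity} to $k$ vertex-disjoint paths via a planar Menger / uncrossing argument (and you cite a reference \texttt{lindstrom} that does not appear in this paper's bibliography). The paper avoids this entirely: by Lemma~\ref{lem:tposflow}, $\det N^{A,B}=Z^{A,B}_{\aflow}/Z^{\emptyset}_{\aflow}$, so positivity is equivalent to $\flows_{A,B}\neq\emptyset$; Lemma~\ref{lem:flowprop} then shows that when $A$ and $B$ are contiguous, no connected component of an alternating flow in $\flows_{A,B}$ can touch more than two vertices of $A^{\source}\cup B^{\sink}$ (the sources and sinks must interlace around the outer face, which contiguity forbids for four or more). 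Hence a flow automatically decomposes into $k$ components giving vertex-disjoint connections $a_i\leftrightarrow b_i$, no surgery needed; conversely, Lemma~\ref{lem:revflowprop} turns $k$ vertex-disjoint paths into an admissible flow. If you insist on your uncrossing route, you owe an actual argument that the local alternation constraint survives the rerouting --- the point you flag as ``the main obstacle'' is exactly where the proof would live, and you have not supplied it.
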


\begin{remark}
One can prove that if $A$ and $B$ are as above, then $\det M^{A,B} = \det N^{A,B}$ by showing that the additional signs in $N^{A,B}$ make the same sign contribution to the determinant as the reverse
order on columns in $M^{A,B}$.
This is evident in the following expansion of these determinants.
Let $A,B$ be any subsets of $W$ of equal cardinality, and let $\pi : A \to B$ be a bijection. 
One can interpret $\pi$ as a pairing of the disjoint union $A\sqcup B$, i.e., a partition of $A\sqcup B$ into pairs $\{ a , \pi(a) \}$.
One can then think of a diagrammatic 
representation of $\pi$ where points representing $A$ and $B$ are placed in the corresponding order on the boundary of a disk, and straight line segments connect the points according to $\pi$. 
Here, for each $v \in  A_{\circ}\cap B_{\circ}$ (resp.\ each $v \in  A_{\bullet}\cap B_{\bullet} $), 
two copies of $v$ are placed on the circle, and the one corresponding to a point in $A$ comes immediately after (resp.\ before) the one corresponding to a point in $B$ in the counterclockwise order.
We define $\textnormal{xing}(\pi)$ to be the number of crossings between the line segments. It is easy to see that when $A$, $B$ are as in Corollary~\ref{cor:totpos}, then 
 $\textnormal{xing}(\pi)$ is the number of inversions of $\pi$ treated as a permutation of the index set $\{ 1, \ldots, k\}$. Hence, by the definition of the determinant,
 \begin{align} \label{eq:nicesign}
 \det M^{A,B} = \sum_{\pi : A \to B} (-1)^{\textnormal{xing}(\pi)} \prod_{\{ a,b\} \in \pi} \langle \sigma_a \sigma_b \rangle.
 \end{align}
Moreover, the choice of signs in the definition of the matrix $N^A$ yields an analogous expansion of the determinant for general choices of $A$ and $B$:
\begin{align} \label{eq:nicesign1}
 \det N^{A,B} &= \sum_{\pi: A \to B} (-1)^{\textnormal{xing}(\pi)} \prod_{\{a,b\} \in \pi}\langle \sigma_{a} \sigma_{b} \rangle.
\end{align}
A verification of \eqref{eq:nicesign1} can be found in~\cite[Proposition 5.2]{postnikov} or~\cite[Proposition 2.12]{talaska}.
\end{remark}

Our second main result provides an interpretation of these determinants in terms of random currents. A current on $G$ is a function 
$\n: E \to \{0,1,2,\dots \}$. 
By $\partial \n$ we denote the set of sources of $\n$, i.e., vertices $v$ such that $  \sum_{u : \ \{u,v\} \in E} \n_{\{u,v\}}$
is odd.
For $A \subseteq V $, we define $\currs_A = \{ \n \mid \partial \n =A \}$.
The weight of a current $\n $ is defined by
\begin{align} \label{eq:currentweight}
\weight(\n)=\prod_{e \in E} \frac{ (J_e)^{\n_e}}{\n_e!},
\end{align}
and the random current probability measure with boundary conditions $A$ is given by
\[
\IP^A_{\rcur}(\n) = \frac{\weight(\n)}{Z^A_{\rcur}}, \quad \n \in \currs_A,
\]
where $Z^A_{\rcur} =\sum_{\n \in \currs_A} \weight(\n)$ is the partition function. 

Note that if $\n_1 \in \currs_{A}$ and $\n_2 \in \currs_{\emptyset}$, then $\n_1+\n_2 \in  \currs_{A}$.
The double random current probability measure $\IP^{A}_{\drcur}$ with boundary conditions 
$A$ is defined to be the measure of the sum of two independent random currents with $A$ and $\emptyset$ boundary conditions respectively:
\[
\IP^{A}_{\drcur}(\n) =\IP^A_{\rcur} \otimes \IP^{\emptyset}_{\rcur}(\{(\n_1,\n_2) \in \currs_A \times \currs_{\emptyset} \mid \n_1 + \n_2 = \n \}), \quad \n \in \currs_A.
\]

For $u,v \in V$ and a current $\n$, we will write $u \con{\n} v$ if $u$ and $v$ are connected in $G$ by a path of edges with non-zero values of $\n$, 
and we will write~$u \ncon{\n} v$ otherwise.
Let $A$ and $B$ be contiguous sets of boundary vertices as in Corollary~\ref{cor:totpos}.
We define the event of having parallel and disjoint connections between $A$ and $B$ (see Figure~\ref{fig:parallel}) by
\[
\para_{A, B} = \{\n \in \currs_{A\cup B} \mid a_i \con{\n} b_i \text{ for all } i , \  a_i \ncon{\n} b_j \text{ for all } i\neq j\}.
\]
\begin{figure}
		\begin{center}
			\includegraphics{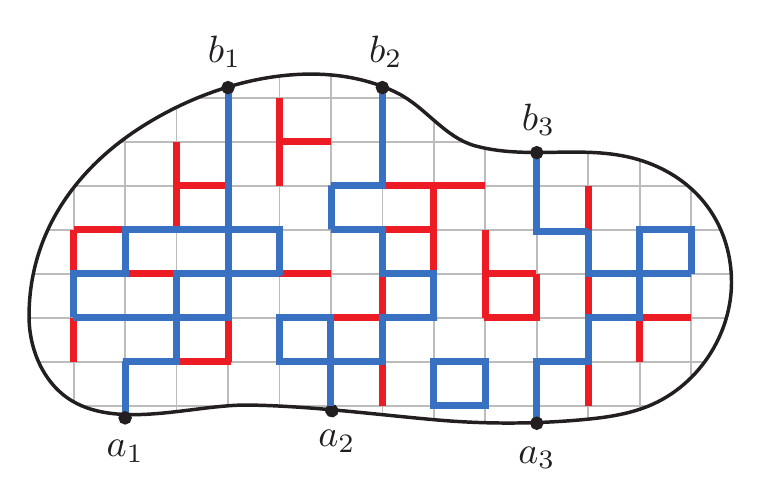}  
		\end{center}
		\caption{The coloured edges represent non-zero values of a current $\n \in \para_{A, B}$ with $A=\{a_1,a_2,a_3\}$ and 
		$B=\{b_1,b_2,b_3 \}$. The blue edges represent odd values and the red edges represent even values }
	\label{fig:parallel}
\end{figure}
As a consequence of the proof of Theorem~\ref{thm:positivity} and Corollary~\ref{cor:totpos}, we can compute the probability of $\para_{A ,B}$ under the double random current measure:
\begin{theorem} \label{thm:pairing} Let $A$ and $B$ be contiguous sets of boundary vertices as in Corollary~\ref{cor:totpos}, and let $v_1, v_2, \dots, v_{2k}$ be a counterclockwise order on $A\cup B$. 
Consider the $2k \times 2k$ skew-symmetric matrix satisfying
\[ 
{K}^{A \cup B}_{i,j} = \langle {\sigma_{v_i} \sigma_{v_j}} \rangle \qquad \text{ for } i<j.
\]
Then,
\[
\IP_{\drcur}^{A\cup B} (\para_{A, B} ) =\frac{\det M^{A,B}  }{\pf {K}^{A \cup B}},
\]
where $\pf$ denotes the Pfaffian of a skew-symmetric matrix.
\end{theorem}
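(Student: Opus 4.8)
The plan is to convert the asserted probability identity into a purely enumerative one and then route it through the distributional identities of Sections~\ref{sec:currents} and~\ref{sec:flows}, reusing the machinery built for Theorem~\ref{thm:positivity}. First I would clear denominators on both sides. Unfolding the definition of the double random current measure,
\[
\IP_{\drcur}^{A\cup B}(\para_{A, B})=\frac{1}{Z^{A\cup B}_{\rcur}\,Z^{\emptyset}_{\rcur}}\sum_{\n\in\para_{A,B}}\ \sum_{\substack{\n_1+\n_2=\n\\ \partial\n_1=A\cup B,\ \partial\n_2=\emptyset}}\weight(\n_1)\,\weight(\n_2).
\]
On the other side, the Groeneveld--Boel--Kasteleyn Pfaffian relation (Lemma~\ref{lem:gbk}) identifies $\pf K^{A\cup B}$ with the $2k$-point function $\langle\sigma_{v_1}\cdots\sigma_{v_{2k}}\rangle$, which by the classical current representation of spin correlations equals $Z^{A\cup B}_{\rcur}/Z^{\emptyset}_{\rcur}$. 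Substituting and using the expansion~\eqref{eq:nicesign} of $\det M^{A,B}$, the theorem becomes equivalent to the identity
\[
\sum_{\n\in\para_{A,B}}\ \sum_{\substack{\n_1+\n_2=\n\\ \partial\n_1=A\cup B,\ \partial\n_2=\emptyset}}\weight(\n_1)\,\weight(\n_2)\;=\;(Z^{\emptyset}_{\rcur})^2\sum_{\pi:A\to B}(-1)^{\textnormal{xing}(\pi)}\prod_{\{a,b\}\in\pi}\langle\sigma_a\sigma_b\rangle,
\]
which mentions no probabilities. As a sanity check, for $k=1$ both sides reduce to $Z^{\{a_1,b_1\}}_{\rcur}Z^{\emptyset}_{\rcur}$, since any current with sources $\{a_1,b_1\}$ already connects them.

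Next I would analyse the left-hand side with the tools of Section~\ref{sec:currents}. On the event $\para_{A,B}$ the clusters of $\n$ containing $a_i$ and $b_i$ coincide and, by the very definition of $\para_{A,B}$, are pairwise distinct, so each such cluster contains exactly two sources of $\n$; hence the odd-edge subgraph of $\n$ inside it is an arc from $a_i$ to $b_i$ together with some cycles, and the odd part of $\n$ realises the parallel pairing $a_i\leftrightarrow b_i$. A switching-lemma-type argument then evaluates $\sum_{\n_1+\n_2=\n}\weight(\n_1)\weight(\n_2)$ for currents $\n\in\para_{A,B}$ with a prescribed backbone of edges, presenting the left-hand side as a weighted sum over such backbones; this is the content of the distributional identity of Section~\ref{sec:currents}.

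I would then feed the backbones into Section~\ref{sec:flows}: they correspond, in a weight-preserving way, to alternating flows of Talaska on the auxiliary directed planar graph $\tilde G$ built from $G$ (Figure~\ref{fig:directedgraph}), with sources $A$, sinks $B$, and edge weights determined by the $J_e$. By construction the boundary measurement matrix of $\tilde G$ coincides, up to the explicit signs, with the matrix $N^{A}$ of Theorem~\ref{thm:positivity}, so its $(A,B)$-minor is, up to the flow partition function, the generating function of families of pairwise compatible alternating flows from $A$ to $B$ --- this is Talaska's theorem (Lemma~\ref{lem:tposflow}). The Lindstr\"om--Gessel--Viennot-type sign cancellation intrinsic to that theorem collapses the alternating sum over pairings $\pi$ to the single non-crossing pairing $a_i\leftrightarrow b_i$, which is exactly the contribution of $\para_{A,B}$; matching the normalising constant --- which the construction of Section~\ref{sec:flows} makes equal to $(Z^{\emptyset}_{\rcur})^2$ --- yields the displayed identity. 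Since $\det M^{A,B}=\det N^{A,B}$ for contiguous $A,B$ by the Remark, this finishes the proof.

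The hard part will be carrying three pieces of bookkeeping through the correspondence of Section~\ref{sec:flows} simultaneously: (i) verifying that the odd/even decomposition of a double current in $\para_{A,B}$ maps to a legitimate alternating flow and that $\weight(\n_1)\weight(\n_2)$ matches the flow weight once one sums over the invisible even multiplicities and over the cycles in each cluster; (ii) checking that the topological sign $(-1)^{\textnormal{xing}(\pi)}$ appearing in $\det M^{A,B}$ agrees with the sign with which a flow realising $\pi$ enters Talaska's formula, so that only the parallel pairing survives after cancellation; and (iii) confirming that the partition functions normalising the two sides are literally equal and not merely proportional. Once the two section-level identities and Talaska's theorem are in hand, the argument above is essentially forced.
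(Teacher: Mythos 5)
Your overall route coincides with the paper's --- you reduce to comparing the induced double-current measure (Theorem~\ref{thm:dcurrformula}) with the induced alternating-flow measure (Theorem~\ref{thm:flowformula}), and you deploy Lemma~\ref{lem:tposflow}, Lemma~\ref{lem:gbk} and Corollary~\ref{cor:empty} at the correct places --- but the middle of the argument is missing the two structural facts that make the comparison work, and one of the mechanisms you name is a dead end.

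The density ratio coming from Theorems~\ref{thm:dcurrformula} and~\ref{thm:flowformula} is
\[
\frac{\overline{\IP}^{A\cup B}_{\drcur}(\set)}{\overline{\IP}^{A,B}_{\aflow}(\set)}
= \frac{\bar Z^{A,B}_{\aflow}}{\bar Z^{A\cup B}_{\drcur}}\,2^{\,k'(\set)-|A|},
\]
where $k'(\set)$ is the number of connected components of $\set$ meeting $A\cup B$. Your phrase that the backbones ``correspond, in a weight-preserving way, to alternating flows'' silently drops the factor $2^{k'(\set)-|A|}$. The key observation the paper makes, and which you must make explicit, is that on $\overline{\para}_{A,B}$ every component carries exactly one pair $\{a_i,b_i\}$, so $k'(\set)=|A|$ and this factor is identically~$1$; without that, the two measures are not proportional and the argument stalls. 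You also need the support identity $\overline{\para}_{A,B}=\sets_{A,B}$, which is where planarity and the contiguity of $A,B$ actually enter: by Lemma~\ref{lem:flowprop} the sources and sinks of each flow component must interlace around the outer face, so for contiguous $A$ and $B$ each component can contain only one $a_i$ and its opposite $b_i$; conversely Lemma~\ref{lem:revflowprop} realises every parallel connection pattern as a flow. Your description --- that a Lindstr\"om--Gessel--Viennot cancellation inside Talaska's theorem ``collapses the alternating sum over pairings $\pi$ to the single non-crossing pairing'' --- is not what happens: that cancellation is internal to the proof of $\det N^{A,B}=Z^{A,B}_{\aflow}/Z^{\emptyset}_{\aflow}$ and takes place over families of walks, while the collapse you actually need occurs at the level of supports of the two induced measures, not at the level of the pairing expansion~\eqref{eq:nicesign}.

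Finally, the ``switching-lemma-type argument'' you propose for evaluating the inner sum $\sum_{\n_1+\n_2=\n}\weight(\n_1)\weight(\n_2)$ is a red herring: the paper notes in Example~\ref{ex:1} that the switching lemma does not directly give Theorem~\ref{thm:pairing} for $k>2$. What replaces it is precisely the pair of lemmas above together with the constancy of the Radon--Nikodym derivative on $\overline{\para}_{A,B}$. Once those are supplied, the rest of your outline --- including the $k=1$ sanity check and the normalisation $Z^{\emptyset}_{\aflow}=(Z^{\emptyset}_{\rcur})^2$ --- goes through and reproduces the paper's proof.
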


\begin{example} \label{ex:1} We consider the simplest non-trivial case of Theorem~\ref{thm:pairing} when $k=2$. To this end, let $ S= \{ a,b,c,d \} \subseteq W$ be ordered counterclockwise, and let 
$\mathcal{X} = \{ \n \in \currs_{S } \mid a\con{\n} b \con{\n} c \con{\n} d \}$.
Note that by planarity of $G$, and the fact that each connected component of $\n$ contains an even number of vertices in $\partial \n$, 
the complement of $\mathcal{X}$ in $\currs_{ S}$ is $\para_{\{a,b\}, \{c,d\}} \cup \para_{\{a,d\}, \{b,c\}} $.
We have $\pf {K}^{S}  = \langle \sigma_a \sigma_b \rangle\langle \sigma_c \sigma_d \rangle +
\langle \sigma_a \sigma_d \rangle\langle \sigma_b \sigma_c \rangle -
\langle \sigma_a \sigma_c \rangle\langle \sigma_b \sigma_d \rangle$,
and hence by Theorem~\ref{thm:pairing},
\begin{align*}
&\IP_{\drcur}^{ S} (\para_{\{a,b\}, \{c,d\}} )=\frac{ \langle \sigma_a \sigma_d \rangle\langle \sigma_b \sigma_c \rangle -
\langle \sigma_a \sigma_c \rangle\langle \sigma_b \sigma_d \rangle } {\pf {K}^{ S} }, \\ 
&\IP_{\drcur}^{S} (\para_{\{a,d\}, \{b,c\}} )=\frac{ \langle \sigma_a \sigma_b \rangle\langle \sigma_c \sigma_d \rangle -
\langle \sigma_a \sigma_c\rangle\langle \sigma_b \sigma_d \rangle } {\pf {K}^{ S} }, \\
&\IP_{\drcur}^{S} (\mathcal{X}) = \frac{\langle \sigma_a \sigma_c \rangle\langle \sigma_b \sigma_d \rangle } {\pf {K}^{ S} }.
\end{align*}
We note that these formulas follow from the switching lemma of Griffiths, Hurst and Sherman~\cite{GHS} 
(see e.g.\ \cite{ADCS} for a modern treatment of the lemma). The statement of Theorem~\ref{thm:pairing} for $k>2$ does not however 
seem to be a direct consequence of this lemma.
\end{example}

\begin{remark}
For future reference, recall that the Pfaffian of the skew-symmetric matrix ${K}^{A \cup B}$ is the square root of its determinant, and it is a well known fact that it can be 
written as
\begin{align} \label{eq:nicesign2}
\pf {K}^{A \cup B} = \sum_{\pi: \textnormal{ pairing of } A \cup B} (-1)^{\textnormal{xing} (\pi)}  \prod_{\{u,v\} \in \pi}\langle \sigma_{u} \sigma_{v} \rangle,
\end{align}
where the sum is over all pairings of $A\cup B$, and where $\textnormal{xing}$ is defined as in~\eqref{eq:nicesign1}.
\end{remark}

Our last main result concerns the scaling limit at criticality of the parallel connection probability in double random currents.
The proof relies on the computation of Hongler~\cite{hongler} of the scaling limit of the boundary two-point functions themselves. Analogous and more general results 
were obtained by Kenyon and Wilson~\cite{KenWil1} for the double dimer model, multichordal loop erased walk, and grove Peano curves.

\begin{theorem}\label{thm:scalinglimit} Let $D$ be a bounded finitely-connected domain in the complex plane with a piecewise $C^1$ boundary. 
Let $\partial^{\textnormal{s}} D$ be the straight part of the boundary, i.e., the part composed of intervals 
parallel either to the real or imaginary axis. 
Let $A=\{ a_1,a_2,\ldots,a_k \} \subset \partial^{\textnormal{s}} D$ and $B=\{ b_1, b_2, \ldots b_k\} \subset \partial^{\textnormal{s}} D$ be such that
$a_1,\ldots,a_k, b_k,\ldots,b_1$
is a counterclockwise ordering of $A\cup B$ around the outer boundary of $D$.
For $\epsilon >0$, let $D_{\epsilon}$ be the maximal connected component of the rescaled square nearest-neighbor lattice
$\epsilon \mathds{Z}^2$ contained in $D$. Consider the double random current measure $\IP^{A_{\epsilon} \cup B_{\epsilon}}_{\drcur}$ on 
$D_{\epsilon}$ with homogeneous critical coupling constants $J_e = \frac12 \log (\sqrt2 +1) $, where $A_{\epsilon}$ and $B_{\epsilon}$
are sets of vertices on the outer face of $D_{\epsilon}$ approximating $A$ and $B$. 
Then, the following limit exists
\[
p_{A,B}(D) : = \lim_{\epsilon \to 0} \IP_{\drcur}^{A_{\epsilon}\cup B_{\epsilon}} (\para_{A_{\epsilon}, B_{\epsilon}} ).
\]
Moreover, if $\mathbb{H}$ is the upper half-plane and $\varphi : D \to \mathbb{H}$ is a conformal equivalence between $D$ and $\mathbb{H}$, then
\[
p_{A,B}(D) = \frac{\det \tilde {M}^{A,B}  }{\pf \tilde {K}^{A\cup B}},
\]
where $\tilde M^{A,B }$ is a $k \times k$ matrix, and $ \tilde K^{A\cup B} $ is a $2k \times 2k$ skew-symmetric matrix satisfying
\[
\tilde M^{A,B }_{i,j} =  \frac{1}{|\varphi(a_i)-\varphi(b_j)|}, \quad \text{and} \quad  \tilde K^{A\cup B}_{i,j} = \frac{1}{|\varphi(v_i)-\varphi(v_j)|} \quad \text{for } i < j,
\]
where $v_1 ,v_2 , \dots , v_{2k} $ is a counterclockwise order on $A\cup B$.
\end{theorem}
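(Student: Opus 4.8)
The plan is to feed the exact identity of Theorem~\ref{thm:pairing} into the scaling limit of the boundary two-point spin correlations established by Hongler~\cite{hongler}, and then to check that every lattice- and domain-dependent prefactor cancels between numerator and denominator. For $\epsilon$ small enough that the approximating vertices $A_\epsilon,B_\epsilon$ inherit the counterclockwise cyclic order of $A\cup B$ on the outer face of $D_\epsilon$ — which they do because $A,B\subset\partial^{\textnormal{s}}D$ and $a_1,\dots,a_k,b_k,\dots,b_1$ is counterclockwise on $\partial D$ — Theorem~\ref{thm:pairing} gives
\[
\IP^{A_\epsilon\cup B_\epsilon}_{\drcur}(\para_{A_\epsilon,B_\epsilon})=\frac{\det M^{A_\epsilon,B_\epsilon}}{\pf K^{A_\epsilon\cup B_\epsilon}}.
\]
By the expansions~\eqref{eq:nicesign} and~\eqref{eq:nicesign2}, each of $\det M^{A_\epsilon,B_\epsilon}$ and $\pf K^{A_\epsilon\cup B_\epsilon}$ is a \emph{finite} signed sum of products of $k$ boundary two-point functions $\langle\sigma_u\sigma_v\rangle_{D_\epsilon}$ with $u,v\in A\cup B$; crucially, every term of $\det M^{A_\epsilon,B_\epsilon}$ is a product in which each of $a_1,\dots,a_k,b_1,\dots,b_k$ occurs exactly once, and every term of $\pf K^{A_\epsilon\cup B_\epsilon}$ is a product in which each element of $A\cup B$ occurs exactly once. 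So everything reduces to the $\epsilon\to0$ asymptotics of a single factor $\langle\sigma_u\sigma_v\rangle_{D_\epsilon}$ and to tracking the cancellation of prefactors in the ratio.

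For the asymptotics I would invoke Hongler's theorem. Because $A$ and $B$ lie on the straight part $\partial^{\textnormal{s}}D$, the boundary of $D_\epsilon$ near each of these points is flat and axis-parallel, and there is a single constant $\mathcal C>0$ (depending only on the square lattice, and the same for horizontal and vertical flat boundary by the symmetry of $\mathbb Z^2$) such that for all $u,v\in A\cup B$
\[
\langle\sigma_{u_\epsilon}\sigma_{v_\epsilon}\rangle_{D_\epsilon}=\mathcal C\,\epsilon\,\frac{|\varphi'(u)|^{1/2}\,|\varphi'(v)|^{1/2}}{|\varphi(u)-\varphi(v)|}+o(\epsilon),\qquad\epsilon\to0,
\]
which expresses the conformal covariance of weight $\tfrac12$ of the continuum boundary spin together with the fact that in $\mathbb H$ the continuum boundary two-point function is a constant multiple of $|x-y|^{-1}$. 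The restriction to $\partial^{\textnormal{s}}D$ in the statement is precisely what makes such a universal constant available: it is on flat, lattice-aligned arcs that the discrete boundary spins converge to their continuum counterparts with one and the same normalization.

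Substituting this expansion into~\eqref{eq:nicesign} and~\eqref{eq:nicesign2} and collecting, and using that $\varphi$ extends to a homeomorphism of the relevant boundary arcs (the boundary being piecewise $C^1$), hence preserves the cyclic order of $A\cup B$ and leaves the crossing combinatorics unchanged, one obtains
\[
\det M^{A_\epsilon,B_\epsilon}=(\mathcal C\epsilon)^k\Big(\prod_{v\in A\cup B}|\varphi'(v)|^{1/2}\Big)\det\tilde M^{A,B}+o(\epsilon^k),
\]
\[
\pf K^{A_\epsilon\cup B_\epsilon}=(\mathcal C\epsilon)^k\Big(\prod_{v\in A\cup B}|\varphi'(v)|^{1/2}\Big)\pf\tilde K^{A\cup B}+o(\epsilon^k),
\]
where the identification of the two signed sums over pairings with $\det\tilde M^{A,B}$ and $\pf\tilde K^{A\cup B}$ is exactly the matching of $\textnormal{xing}(\pi)$ with inversions, resp.\ Pfaffian signs, recorded in the Remark following Corollary~\ref{cor:totpos}. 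Dividing the two displays, the factor $(\mathcal C\epsilon)^k$ and the product $\prod_{v\in A\cup B}|\varphi'(v)|^{1/2}$ cancel identically, which simultaneously shows that the limit $p_{A,B}(D)$ exists and equals $\det\tilde M^{A,B}/\pf\tilde K^{A\cup B}$ — provided the denominator does not vanish.

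The one point that needs a separate argument, and which I expect to be the main obstacle, is the strict positivity $\pf\tilde K^{A\cup B}>0$ (without it the formula in the statement is not even well posed). One clean route: by the Groeneveld--Boel--Kasteleyn Pfaffian identity (cf.\ Lemma~\ref{lem:gbk}), $\pf K^{A_\epsilon\cup B_\epsilon}=\langle\sigma_{v_1}\cdots\sigma_{v_{2k}}\rangle_{D_\epsilon}$, which is positive by Griffiths' inequality and, by Hongler's theorem, converges after the rescaling above to a strictly positive conformally covariant function, forcing $\pf\tilde K^{A\cup B}>0$. Alternatively one can argue directly: for real points $x_1<\dots<x_{2k}$ the skew-symmetric matrix $\big(1/(x_j-x_i)\big)_{i<j}$ is a Cauchy-type matrix whose Pfaffian is positive. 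Once this is granted, the rest is bookkeeping: the sums in~\eqref{eq:nicesign} and~\eqref{eq:nicesign2} are finite, so the termwise asymptotics may be added, and one only has to record that the discrete data $A_\epsilon,B_\epsilon$ sit in the cyclic order required by Theorem~\ref{thm:pairing} for all small $\epsilon$.
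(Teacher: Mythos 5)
Your argument follows the same route as the paper's proof: apply Theorem~\ref{thm:pairing} to express the discrete probability as $\det M^{A_\epsilon,B_\epsilon}/\pf K^{A_\epsilon\cup B_\epsilon}$, invoke Hongler's scaling limit $\epsilon^{-1}\langle\sigma_{a_\epsilon}\sigma_{b_\epsilon}\rangle\to f_{a,b}(D)$ together with its conformal covariance of weight $\tfrac12$, expand both numerator and denominator via \eqref{eq:nicesign} and \eqref{eq:nicesign2}, and observe that the factor $\epsilon^{k}$, the lattice constant, and $\prod_{v\in A\cup B}|\varphi'(v)|^{1/2}$ cancel identically in the ratio. The only material addition in your write-up is the explicit verification that $\pf\tilde K^{A\cup B}>0$ so that the limiting ratio is well posed; the paper leaves this implicit, and your first route (Lemma~\ref{lem:gbk} plus Griffiths' inequality and the strict positivity of Hongler's limit) is a clean way to settle it, whereas the asserted Cauchy-type Pfaffian positivity would need a citation or short proof before it could be used.
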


\section{Random currents} \label{sec:currents}
In this section $G=(V,E)$ is a finite (not necessarily planar) graph. Note that the definitions of the Ising model and the random current model generalize verbatim to arbitrary finite graphs.

Let $A\subseteq V$ be a set of even cardinality (possibly empty).
With a current $\n \in \currs_A$, we associate a pair of sets of edges $\setm(\n)=(\setm_1(\n),\setm_2(\n))$, where 
\[
\setm_1(\n)=\{e\in E \mid \n_e \text{ is odd}\}, \quad \text{and} \quad \setm_2(\n)=\{e\in E \mid \n_e \text{ is even},\ \n_e \neq 0\}.
\] 
Let $\mathcal{E}_A$ be the collection of sets of edges for which $A$ is the set of vertices with odd degree in the induced subgraph.
One can see that
\[
\sets_A := \{\set(\n) \mid \n\in \currs_A\}= \{(\set_1,\set_2) \mid \set_1 \in \mathcal{E}_A,\   \set_2 \subseteq E \setminus \set_1\}.
\]
We will often identify $\set \in \sets_A$ with the set $\set_1\cup \set_2 \subseteq E$.
Note that for $u,v\in V$, $u \con{\n} v$ if and only if $u$ and $v$ belong to the same connected component of $\setm(\n)$.

Let $x_e =\tanh J_e$ and $y_e=(\cosh J_e)^{-1}$. The next result describes the measure $\overline{\IP}^A_{ \rcur}$ on $\sets_A$ induced from the random current measure.

\begin{lemma} \label{lem:singleinduced}
The probability of $\set \in \sets_A$ induced from the random current measure is given by 
\[
\overline{\IP}^A_{ \rcur}(\set) = \frac 1{\bar{Z}^A_{ \rcur}} \prod_{e\in \set_1} x_e \prod_{e \in \set_2}( 1-y_e)  \prod_{e\in E\setminus \set} y_e,
\]
where $\bar{Z}^A_{ \rcur}= Z^A_{\rcur}   \prod_{e\in E} y_e $.
\end{lemma}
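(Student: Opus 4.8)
The statement is a direct computation: the plan is to push the current measure forward under the map $\n \mapsto \set(\n)$ by summing the weights $\weight(\n)$ over all currents with a prescribed image $\set=(\set_1,\set_2)\in\sets_A$, and then to recognize the resulting expression.

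First I would note that for any vertex $v$ the parity of $\sum_{u:\{u,v\}\in E}\n_{\{u,v\}}$ depends only on the set of edges at which $\n$ is odd, i.e.\ on $\set_1(\n)$. Hence the constraint $\partial\n=A$ is equivalent to $\set_1(\n)\in\mathcal{E}_A$ and imposes no further condition on the values $\n_e$. Consequently, for a fixed $\set=(\set_1,\set_2)\in\sets_A$, the set of currents $\n$ with $\set(\n)=\set$ is exactly the product set of those $\n$ with $\n_e\in\{1,3,5,\dots\}$ for $e\in\set_1$, $\n_e\in\{2,4,6,\dots\}$ for $e\in\set_2$, and $\n_e=0$ otherwise. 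Since $\weight(\n)=\prod_e J_e^{\n_e}/\n_e!$ factorizes over edges, the sum $\sum_{\set(\n)=\set}\weight(\n)$ splits as a product over $e\in E$ of single-edge sums.

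Next I would evaluate these single-edge sums using the Taylor expansions of the hyperbolic functions: for $e\in\set_1$ the factor is $\sum_{k\ \mathrm{odd}}J_e^k/k!=\sinh J_e$; for $e\in\set_2$ it is $\sum_{k\geq 2\ \mathrm{even}}J_e^k/k!=\cosh J_e-1$; and for $e\in E\setminus\set$ it is $1$. This gives
\[
\sum_{\n:\ \set(\n)=\set}\weight(\n)=\prod_{e\in\set_1}\sinh J_e\ \prod_{e\in\set_2}(\cosh J_e-1).
\]
Then I would rewrite this in terms of $x_e=\tanh J_e$ and $y_e=(\cosh J_e)^{-1}$ via the identities $\sinh J_e=x_e/y_e$ and $\cosh J_e-1=(1-y_e)/y_e$, and multiply and divide by $\prod_{e\in E}y_e$, turning the denominator $\prod_{e\in\set_1\cup\set_2}y_e$ into $\prod_{e\in E}y_e\big/\prod_{e\in E\setminus\set}y_e$. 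Dividing by $Z^A_{\rcur}$ and absorbing $\prod_{e\in E}y_e$ into $\bar Z^A_{\rcur}$ yields the claimed formula for $\overline{\IP}^A_{\rcur}(\set)$.

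\textbf{Main obstacle.} Each step here is elementary, so there is no genuine difficulty; the only point that requires a moment's care is the first one — verifying that fixing $\set_1\in\mathcal{E}_A$ already encodes the source constraint $\partial\n=A$ in full, so that the sum over currents mapping to $\set$ really does decouple across edges and the geometric-type series can be summed edge by edge.
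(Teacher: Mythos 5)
Your proposal is correct and follows essentially the same route as the paper: factor the weight sum over edges, sum the odd/even geometric-type series to get $\sinh J_e$ and $\cosh J_e - 1$, and then rewrite in terms of $x_e$ and $y_e$. The only addition is your explicit preliminary remark that the source constraint $\partial\n=A$ depends only on $\set_1(\n)$, which the paper leaves implicit when it factorizes the sum; this is a helpful clarification but not a different argument.
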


\begin{proof}  We have
\begin{align*}
\overline{\IP}^A_{ \rcur}(\set) &= \sum_{\n \in \currs_A: \ \setm(\n)=\set}  \IP^A_{ \rcur} (\n)  \\
 &=\frac 1{{Z}^A_{ \rcur}} \sum_{\n \in \currs_A: \ \setm(\n)=\set} \prod_{e \in E} \frac{ (J_e)^{\n_e}}{\n_e!}  \\
 &= \frac 1{{Z}^A_{ \rcur}}\prod_{e\in \set_1}\sum_{k=1}^{\infty} \frac{(J_e)^{2k-1}}{(2k-1)!} \prod_{e \in \set_2}\sum_{k=1}^{\infty} \frac{(J_e)^{2k}}{(2k)!}  \\
 &=  \frac 1{{Z}^A_{ \rcur}} \prod_{e\in \set_1} \sinh J_e \prod_{e \in \set_2}( \cosh J_e-1) \\
& =   \frac1{\bar{Z}^A_{ \rcur}} \prod_{e\in \set_1} x_e \prod_{e \in \set_2}( 1-y_e)  \prod_{e\in E\setminus \set} y_e  . \qedhere
\end{align*}
\end{proof}

For $\set\subseteq E$, let $\mathcal{E}_{\emptyset}(\set) = \mathcal{E}_{\emptyset} \cap \{\set' \mid \set'\subseteq \set \}$.
The main result of this section gives a formula for the probability measure $\overline{\IP}^{A}_{ \drcur}$ on $\sets_A$ induced from the double random current measure.

\begin{theorem} \label{thm:dcurrformula} The probability of $\set \in \sets_A$ induced from the double random current measure is given by
\[
\overline{\IP}^{A}_{ \drcur}(\set)= \frac1{\bar{Z}^{A}_{ \drcur}}  |\mathcal{E}_{\emptyset}(\set)| \prod_{e\in \set_1} x_e  \prod_{e\in \set_2} x^2_e 
   \prod_{e\in E\setminus \set}   (1-x^2_e),
\]
where $\bar{Z}^{A}_{ \drcur}=Z^A_{\rcur}  Z^{\emptyset}_{\rcur}  \prod_{e\in E} (1-x^2_e)$.
\end{theorem}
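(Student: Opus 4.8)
The plan is to compute $\overline{\IP}^A_{\drcur}(\set)$ directly from the definition of the double current as a sum of two independent currents, and to decompose the computation according to which edges carry odd/even/zero values in each of the two constituent currents. Recall that $\n = \n_1 + \n_2$ with $\n_1 \in \currs_A$, $\n_2 \in \currs_\emptyset$ independent; I want to understand, for a fixed $\set = (\set_1, \set_2)$, the set of pairs $(\n_1, \n_2)$ with $\set(\n_1 + \n_2) = \set$. The key observation is that the parity of $(\n_1)_e + (\n_2)_e$ being odd forces exactly one of $(\n_1)_e, (\n_2)_e$ to be odd; and $\set_2$ (even, nonzero sum) together with $E \setminus \set$ (zero sum) are the remaining edges. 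So the first step is to record $\overline{\IP}^A_{\drcur}(\set) = \sum_{\set', \set''} \overline{\IP}^A_{\rcur}(\set') \, \overline{\IP}^\emptyset_{\rcur}(\set'')$ where $(\set', \set'')$ ranges over pairs of elements of $\sets_A \times \sets_\emptyset$ that are ``compatible'' with $\set$ in a sense to be made precise via the edgewise parity bookkeeping.

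The second step is to carry out the edgewise analysis. Fix an edge $e$. On $\set_1$: the sum is odd, so $\{(\n_1)_e \text{ odd}, (\n_2)_e \text{ even}\}$ or $\{(\n_1)_e \text{ even}, (\n_2)_e \text{ odd}\}$ — equivalently $e \in \set_1' \text{ or } e \in \set_1''$, and not both, and no constraint forcing the even one to be nonzero. On $\set_2$: the sum is even and positive, so $e$ must be nonzero in at least one of $\n_1, \n_2$, with matching parities — either both odd ($e \in \set_1' \cap \set_1''$), or both even with at least one nonzero ($e \in \set_2' \cup \set_2''$, with the subtlety that one of them could be an even zero, i.e.\ genuinely absent). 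On $E \setminus \set$: both currents are even at $e$, and their sum is zero, so in fact both are zero, i.e.\ $e \notin \set' \cup \set''$. Using Lemma~\ref{lem:singleinduced} to plug in the weights $x_e$ for odd edges, $(1 - y_e)$ for even-nonzero edges, $y_e$ for absent edges in each factor, and then summing the product over all compatible $(\set', \set'')$, the per-edge sums should telescope: on $\set_1$ one gets $2 x_e y_e$ (two ways, product $x_e \cdot y_e$ each way, but careful with the normalizing $y_e$'s that are already stripped out — what remains matches $x_e$ after accounting for $\bar Z$); on $E \setminus \set$ one gets $y_e^2$; and on $\set_2$ one gets a sum over the local configurations that should collapse to $x_e^2$ times a factor, with the combinatorial multiplicity on $\set_2$ producing exactly $|\mathcal{E}_\emptyset(\set)|$. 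The cleanest way to see the $|\mathcal{E}_\emptyset(\set)|$ factor: the ``both currents compatible with $\set$'' data is equivalent to choosing, besides the parities, a subset $\set' \subseteq \set$ of edges where the first current is the ``odd on $\set_1$'' or ``carries the odd weight on $\set_2$'' one — and the constraint $\partial \n_1 = A$, $\partial \n_2 = \emptyset$ forces $\set_1 \,\triangle\, (\text{that subset restricted appropriately}) \in \mathcal{E}_\emptyset$, matching the definition $\mathcal{E}_\emptyset(\set) = \mathcal{E}_\emptyset \cap \{\set' \mid \set' \subseteq \set\}$.

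The third step is to assemble: after the per-edge sums, one obtains $\overline{\IP}^A_{\drcur}(\set) = \frac{1}{\bar Z^A_\rcur \bar Z^\emptyset_\rcur} \cdot |\mathcal{E}_\emptyset(\set)| \prod_{e \in \set_1} (\text{something}) \prod_{e \in \set_2}(\text{something}) \prod_{e \notin \set}(\text{something})$, and I then simplify the per-edge factors using $x_e = \tanh J_e$, $y_e = (\cosh J_e)^{-1}$, so that $x_e^2 + y_e^2 = 1$ hence $y_e^2 = 1 - x_e^2$, and $2 x_e y_e$, $(1-y_e)$-type terms combine to $x_e^2$ and $x_e$ respectively after re-absorbing the stray $y_e$'s. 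Collecting the $y_e$ factors and the partition-function constants into $\bar Z^A_\drcur = Z^A_\rcur Z^\emptyset_\rcur \prod_e (1 - x_e^2)$ finishes the identity. The main obstacle I expect is step two, specifically the bookkeeping on $\set_2$: one must correctly count the pairs $((\n_1)_e, (\n_2)_e)$ of nonnegative integers with even sum $\geq 2$ and prescribed ``who is nonzero'' pattern without double-counting, and show the multiplicities across all of $\set_2$ organize into precisely $|\mathcal{E}_\emptyset(\set)|$ rather than some other product — this is where the constraint $\partial \n_1 = A$ enters nontrivially and couples the edges together, so the naive ``independent per-edge'' heuristic needs to be supplemented by the global parity constraint, which is exactly what $\mathcal{E}_\emptyset(\set)$ encodes.
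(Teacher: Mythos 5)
Your overall strategy---decompose the double current into its two independent constituents, analyze parities edge by edge, and attribute the $|\mathcal{E}_\emptyset(\set)|$ factor to the global parity constraint---is the same as the paper's, so you have the right skeleton. But the execution has two genuine problems.

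First, the per-edge weight formulas you write down are incorrect. For $e \in \set_1$ you claim the local contribution is ``$2x_e y_e$.'' But once a choice is fixed of which constituent current is odd at $e$, the odd one contributes $x_e$ and the even one contributes $(1-y_e) + y_e = 1$ (summing over the two possibilities ``even nonzero'' and ``zero''), so the correct local factor is $x_e$, not $x_e y_e$. You even note in your step two that ``no constraint forces the even one to be nonzero,'' but step three then drops the $(1-y_e)$ term and keeps only the $y_e$ one.

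Second, and more seriously, your order of operations is muddled. You attempt independent per-edge sums over all of $(\n_1,\n_2)$ at once and then try to ``supplement'' with the global constraint afterwards---which you yourself flag as the main obstacle and then leave unresolved. The clean way to organize this (and the way the paper does it) is to condition \emph{first} on the odd parts $\set^1_1 := \set_1(\n_1)$ and $\set^2_1 := \set_1(\n_2)$, which are the only globally coupled data. Compatibility with $\set$ is exactly $\set^1_1 \triangle \set^2_1 = \set_1$ and $\set^1_1 \cap \set^2_1 \subseteq \set_2$, and together with $\set^2_1 \in \mathcal{E}_\emptyset$ this is equivalent to a free choice of $\set^2_1 \in \mathcal{E}_\emptyset(\set)$, with $\set^1_1 = (\set_1 \setminus \set^2_1) \cup (\set^2_1 \cap \set_2)$ then determined and automatically in $\mathcal{E}_A$. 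That bijection is what yields the factor $|\mathcal{E}_\emptyset(\set)|$. Once the odd parts are fixed, the even parts $\set^1_2, \set^2_2$ really are independent across edges, and the per-edge sums are $x_e$ on $\set_1$, $x_e^2$ on $\set_2$ (in \emph{both} the ``both odd'' and ``both even, at least one nonzero'' subcases, since $(1-y_e)^2 + 2y_e(1-y_e) = 1 - y_e^2 = x_e^2$), and $y_e^2 = 1 - x_e^2$ on $E \setminus \set$. Your proposal gestures at the right objects but does not carry out this bookkeeping, and the local formulas it does commit to are wrong.
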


\begin{proof}
Let $\n_1 \in \currs_A$, $\n_2 \in \currs_{\emptyset}$, $\n=\n_1 + \n_2$, and $\set^1= \setm(\n_1)$, $\set^2=\setm(\n_2)$, $\set = \setm(\n)$. 
Note that $e\in \set_1$ if and only if $e\in \set^1_1 \triangle \set^2_1$. 
Therefore, by Lemma~\ref{lem:singleinduced}, each $e \in \set_1$ carries a multiplicative weight of $x_e[(1-y_e)+y_e]=x_e$
in the double random current measure. Similarly, $e\in \set_2$ if and only if $e \in \set^1_1 \cap \set^2_1$ or $e \in (\set^1_2 \cup \set^2_2)\setminus (\set^1_1 \cup \set^2_1) $.
By Lemma~\ref{lem:singleinduced}, the multiplicative contribution of $e\in \set_2$ is hence $x_e^2= (1-y_e+y_e)^2-y_e^2$ in both cases.
The contribution of an edge $e \in E \setminus \set$ is then $1-x_e^2$. It is now enough to show that there are exactly 
$|\mathcal{E}_{\emptyset}(\set)|$ choices of $\set^1_1$ and $\set^2_1$ such that $\set^1_1 \triangle \set^2_1=\set_1$ and
$\set^1_1 \cap \set^2_1\subset \set_2$. Indeed, this is equivalent to freely choosing $\set^2_1 \in \mathcal{E}_{\emptyset}(\set)$ and setting
$\set^1_1 = (\set_1 \setminus \set^2_1) \cup (\set^2_1 \cap \set_2) \in \mathcal{E}_{A}$.
\end{proof}

We note that one can express $| \mathcal{E}_{\emptyset}(\set) | $ in terms of the number of vertices, edges, and connected components of $\set$ (see Lemma~\ref{lem:eqcard}).

\section{Random alternating flows} \label{sec:flows}
In this section we assume that $G$ is planar, and we explain how the double random current measure is related to a measure on alternating flows. 
To this end, we define a directed modification $\vec G=(V\cup W^{\source} \cup W^{\sink},\vec E)$ of $G$ as follows.
Each edge $e\in E$ is replaced by three directed edges $\vec e \in \vec E$: 
one middle edge $\vec e_m$, and two side edges $\vec e_{s1}, \vec e_{s2}$ (see Figure~\ref{fig:directedgraph}). The side edges lie on opposite sides
of $\vec e_m$ and have the opposite orientation to $\vec e_m$. The orientation of each middle edge is chosen arbitrarily, 
and the edge weights are given by
\begin{align} \label{eq:edgeweights}
x_{\vec e_m} = \frac{\sinh 2J_e}2 = \frac{x_e}{1-x_e^2}, \qquad x_{\vec e_{s1}}= x_{\vec e_{s2}} = \frac{\tanh J_e}2 = \frac{x_e}2.
\end{align}
Moreover, for each $w\in W$, two vertices $w^{\source} \in W^{\source}$ and $w^{\sink} \in W^{\sink}$, and two directed edges $(w,w^{\sink}), (w^{\source},w) \in \vec E$ 
are placed in the external face of $G$ so that
$w^{\sink}$ becomes a sink and $w^{\source}$ becomes a source. The weights of these edges  are set to $1$.
The vertices are placed in such a way that $w^{\sink}$ is comes immediately before (resp.\ after) $w^{\source}$ if $w\in W_{\circ}$ (resp.\ if $w\in W_{\bullet}$) in the counterclockwise order around the 
external face of $G$ (see Figure~\ref{fig:directedgraph}).

 \begin{figure}
		\begin{center}
			\includegraphics{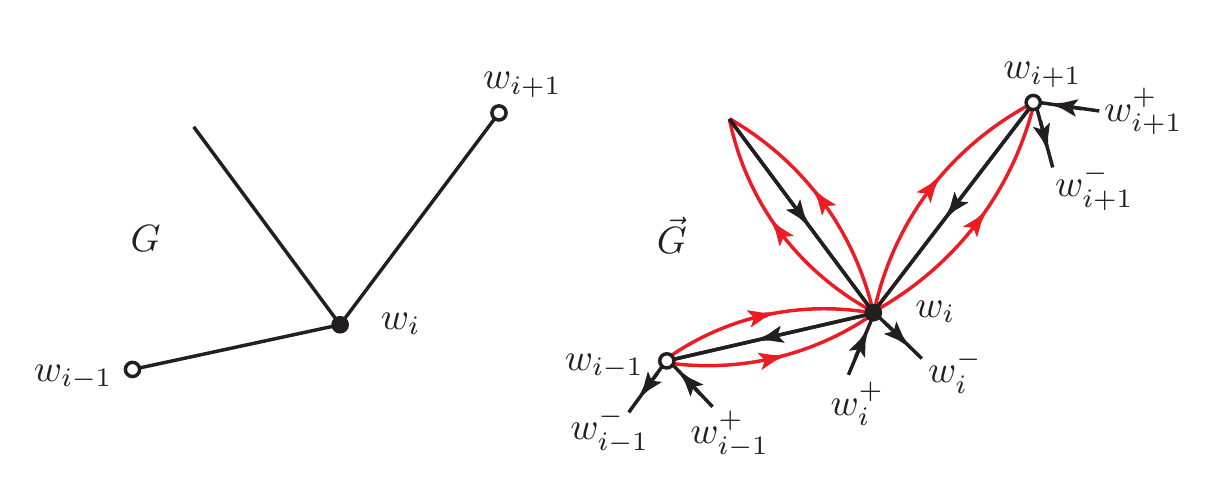}  
		\end{center}
		\caption{A choice of orientations of edges in a directed modification $\vec G$ of $G$. For each boundary vertex $w$, a source $w^{\source}$ and a sink $w^{\sink}$ are added in an order depending on whether $w\in W_{\circ}$ 
		or $w\in W_{\bullet}$  }
	\label{fig:directedgraph}
\end{figure}

A flow on $\vec G$ is a set of edges $F \subseteq \vec E$ such that each vertex has the same number of ingoing and outgoing edges in $F$.
An alternating flow is a flow such that for each $v\in V$, the edges of the flow alternate in orientation around $v$ (edge oriented towards $v$ or away from~$v$).
For $U\subseteq W$, we will write $U^{\source} \subseteq W^{\source}$ and $U^{\sink} \subseteq W^{\sink}$ for the the corresponding sets of sources and sinks in $\vec G$.
For two (possibly intersecting) sets $A ,B\subset W$ of equal cardinality, let 
\[
\flows_{A,B} = \{  \text{alternating flow } F \text{ on } \vec G \mid  V(F)\cap W^{\source} =A^{\source}, V(F)\cap W^{\sink} =B^{\sink} \},
\]
where $V(F)$ is the set of vertices incident on at least one edge of $F$.

Following Talaska~\cite{talaska}, 
we define the weight of an alternating flow $F \in \flows_{A,B} $ by 
\begin{align} \label{def:flow}
\weight(F) = 2^{|A|+|F|-|V(F)|}\prod_{\vec e \in F} x_{\vec e},
\end{align}
and we consider the probability measure on alternating flows with boundary conditions $A,B$ given by
\begin{align*}
\IP^{A,B}_{\aflow} (F) = \frac{\weight(F)}{Z^{A,B}_{\aflow}}, \qquad F  \in \flows_{A,B},
\end{align*}
where 
$Z^{A,B}_{\aflow} = \sum_{F\in \flows_{A,B}} \weight(F)$ is the partition function. 
Note that in~\cite{talaska} general oriented planar graphs are considered, and the connection with double random currents described in this section is realized by our
particular choice of $\vec G$ and its edge weights.

There are four different types of local (interior) edge configurations in an alternating flow $F$ on $\vec G$ according to the
total amount of flow at the endpoints (the number of incoming edges minus the number of outgoing edges) and the orientation of the edges (see Figure~\ref{fig:types}).
Note that it is not possible that $F$ contains $\vec e_{R1}$ and $\vec e_{R2}$ but not $\vec e_{B}$ since then the alternating condition is violated. Also note
that there are three different configurations of type (1a) which are indistinguishable from the point of view of alternating flows -- they can be interchanged without forcing any other changes in the flow. 
 \begin{figure}
		\begin{center}
			\includegraphics{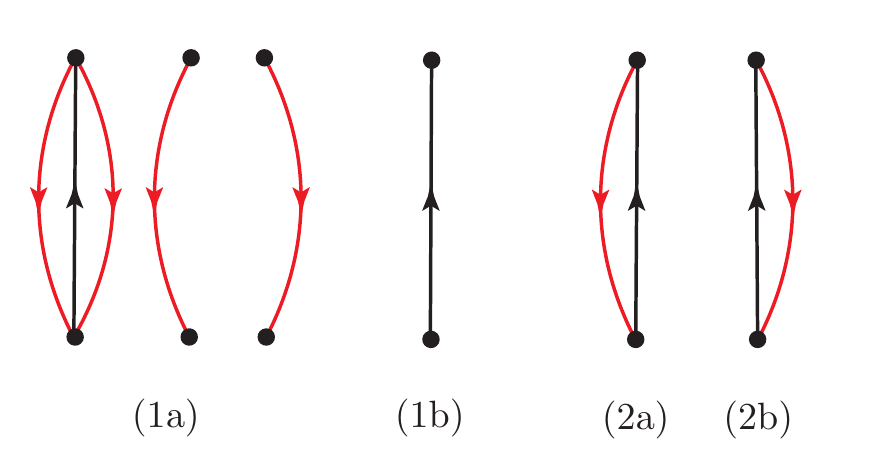}  
		\end{center}
		\caption{Four types of local edge configurations in alternating flows on $\vec G$
		 }
	\label{fig:types}
\end{figure} 

With each $F \in  \flows_{A,B}$, as in Section~\ref{sec:currents}, we associate a pair of sets $\setf(F)=(\setf_1(F), \setf_2(F))$,
where $\setf_1(F)$ is the set of edges $e \in E$ such that the local configuration of $F$ at $e$ is of type (1a) or (1b), and where
$\setf_2(F)$ is the set of edges $e \in E$ with the local configuration of type (2a) or (2b).
We denote by $\sets_{A,B}$ the image of $\flows_{A,B}$ under this map. 
Note that $\setf_1(F) \in \mathcal{E}_{A \triangle B}$ since, by the flow condition, one can split $\set_1(F)$ into a sourceless part in $\mathcal{E}_{\emptyset}$ and a collection of $|A|$ 
edge-disjoint directed paths starting at $A$ and ending at~$B$. In particular, every $v \in A \cap B$ is the starting and ending point of exactly one such path, and hence, 
the degree of $\set_1(F)$ at $v$ is even.
This means that $\sets_{A,B} \subseteq \sets_{A\triangle B}$, and hence the map above induces a probability measure
$\overline{\IP}^{A,B}_{{ \textnormal{a-flow}} }$ on $\sets_{A\triangle B}$, which is supported on $\sets_{A,B}$. 

The main result of this section casts this measure into a form 
related to that of the induced double random current measure from Theorem~\ref{thm:dcurrformula}.

\begin{theorem}  \label{thm:flowformula}
Let $A , B \subset W$ be such that $|A|=|B|$, and let $\vec G$ be one of the directed modifications of $G$ described above.
Let $\set \in\sets_{A, B}$, and let $k'(\set)$ be the number of connected components of $\set$ that contain a vertex in $A \cup B$.
Then, the probability of $\set $ induced from the random alternating flow measure is given by
\[
\overline{\IP}^{A,B}_{{ \textnormal{a-flow}} }(\set) = \frac{1  }{\bar{Z}^{A,B}_{ \aflow}} 
2^{|A|-k'(\set)} |\mathcal{E}_{\emptyset}(\set)| \prod_{e\in \set_1} x_e  \prod_{e\in \set_2} x^2_e 
   \prod_{e\in E\setminus \set}  (1-x^2_e),\]         
where $\bar{Z}^{A,B}_{ \aflow}= Z^{A,B}_{\aflow}  \prod_{e\in E} (1-x_e^2) $. In particular, $\overline{\IP}^{A,B}_{{ \textnormal{a-flow}} } =\overline{\IP}^{B,A}_{{ \textnormal{a-flow}} }$.
\end{theorem}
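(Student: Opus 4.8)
The plan is to reduce the identity to a purely local computation, edge by edge, combined with a single global counting argument for the factor $2^{|A|-k'(\set)}$. Fix $\set \in \sets_{A,B}$ and consider the fiber $\flows_{A,B}(\set) = \{F \in \flows_{A,B} \mid \setf(F) = \set\}$. Summing the weights \eqref{def:flow} over this fiber and dividing by $Z^{A,B}_{\aflow}$ gives $\overline{\IP}^{A,B}_{\aflow}(\set)$; after multiplying by $\prod_{e\in E}(1-x_e^2)$, the claim is that
\[
\sum_{F \in \flows_{A,B}(\set)} 2^{|A|+|F|-|V(F)|} \prod_{\vec e\in F} x_{\vec e} \cdot \prod_{e\in E}(1-x_e^2)
= 2^{|A|-k'(\set)}|\mathcal{E}_{\emptyset}(\set)| \prod_{e\in\set_1} x_e \prod_{e\in\set_2} x_e^2 \prod_{e\in E\setminus\set}(1-x_e^2).
\]
So I would cancel the $2^{|A|}$ on both sides, and then account for the remaining factors by distributing the exponent $|F|-|V(F)|$ over the local configurations at the interior vertices and edges.

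The first main step is the local edge analysis. Using Figure~\ref{fig:types}, I would go through the four types of local configuration at an edge $e=\{u,v\}\in E$ in the directed gadget (middle edge $\vec e_m$ weight $\tfrac{x_e}{1-x_e^2}$, two side edges each weight $\tfrac{x_e}{2}$) and record, for each type, (i) the product of the $x_{\vec e}$ over the flow edges used inside the gadget and (ii) the combinatorial multiplicity, i.e. how many sub-configurations of $F$ restricted to that gadget are identified by the map $\setf$ — recalling the remark that type (1a) comes in three indistinguishable versions, and that configurations using both side edges but not the middle one are forbidden. Keeping careful track of the contributions to $|F|$ (number of gadget edges used) and to the part of $|V(F)|$ coming from gadget-internal structure, I expect each $e\in\set_1$ to contribute $x_e$, each $e\in\set_2$ to contribute $x_e^2$, and each $e\notin\set$ to contribute $1-x_e^2$, once the powers of $2$ are reorganized as in the next step. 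This is essentially the same bookkeeping as in Lemma~\ref{lem:singleinduced} and Theorem~\ref{thm:dcurrformula}, now carried out on $\vec G$; it is routine but must be done exactly, matching the identities $\tfrac{x_e}{1-x_e^2}\cdot(1-x_e^2)=x_e$ and $2\cdot\bigl(\tfrac{x_e}{2}\bigr)^2 + \tfrac{x_e}{1-x_e^2}\cdot 2\cdot\tfrac{x_e}{2}\cdot(1-x_e^2)\cdot(\dots)$ type relations emerging from combining the two side edges with or without the middle one.

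The second main step, which I expect to be the real obstacle, is pinning down the power of $2$ and the factor $|\mathcal{E}_{\emptyset}(\set)|$. Here I would use the decomposition mentioned in the text: $\set_1(F)\in\mathcal{E}_{A\triangle B}$ splits into a sourceless configuration in $\mathcal{E}_{\emptyset}$ together with a system of $|A|$ edge-disjoint directed paths from $A$ to $B$; the possible sourceless parts range exactly over $\mathcal{E}_{\emptyset}(\set)$ (since they must lie inside $\set$ and be edge-disjoint from nothing forbidden), giving the factor $|\mathcal{E}_{\emptyset}(\set)|$. The leftover powers of $2$ — from the prefactor $2^{|F|-|V(F)|}$ after the edge-local part has been extracted, together with the threefold multiplicity at type-(1a) edges and the twofold choices at type-(2a)/(2b) and at vertices where several flow strands cross — should, after an Euler-characteristic-style count on each connected component of $\set$, collapse to $2^{-k'(\set)}$: informally, on a component carrying sources/sinks one power of $2$ is "used up" orienting it consistently with the flow, while sourceless components (and the internal cycle structure) contribute powers of $2$ that are already absorbed into $|\mathcal{E}_{\emptyset}(\set)|$ and the edge weights. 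Concretely I would prove $\sum_{F\in\flows_{A,B}(\set)} 2^{|F|-|V(F)|}\prod_{\vec e\in F}x_{\vec e}\big/\bigl(\prod_{e\in\set_1}x_e\prod_{e\in\set_2}x_e^2\bigr) = 2^{-k'(\set)}|\mathcal{E}_{\emptyset}(\set)|\prod_{e\in E\setminus\set}(1-x_e^2)\big/\prod_{e\in E}(1-x_e^2)$ by induction on the number of edges of $\set$, peeling off a leaf edge or a cycle edge of $\set$ at each step and checking that both sides change by the same factor; the base case is the empty current, where $\flows_{A,B}(\emptyset)$ consists only of flows supported on the $\vec w$-gadgets. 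The symmetry $\overline{\IP}^{A,B}_{\aflow}=\overline{\IP}^{B,A}_{\aflow}$ is then immediate, since the right-hand side of the displayed formula is manifestly invariant under swapping $A$ and $B$ (note $A\triangle B$, $|A|$, $k'(\set)$, and $\sets_{A,B}=\sets_{B,A}$ are all symmetric — the last because reversing all edge orientations of an alternating flow in $\flows_{A,B}$ yields one in $\flows_{B,A}$ with the same image $\set$).
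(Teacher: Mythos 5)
Your overall skeleton — sum over the fiber $\{F : \setf(F)=\set\}$, extract the edge-local contributions $x_e$, $x_e^2$, $1-x_e^2$ from the four gadget types, then handle a residual power of $2$ and the factor $|\mathcal{E}_{\emptyset}(\set)|$ by a global count — is the right one and matches the paper's strategy. The symmetry remark at the end is also correct. But your step two is where the argument goes off the rails, and it is, as you say, the real obstacle.

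The mechanism you propose for $|\mathcal{E}_{\emptyset}(\set)|$ does not work. You want to obtain it by counting decompositions of $\set_1(F)$ into a sourceless part plus $|A|$ edge-disjoint directed paths, with the sourceless parts ``ranging over $\mathcal{E}_{\emptyset}(\set)$.'' Two problems. First, $\set_1(F)=\set_1$ is the same for every $F$ in the fiber (it is determined by $\set$), so there is no variation over the fiber to be counted by such decompositions; you would be counting something about a single, fixed edge set, not enumerating flows. Second, even as a count of decompositions of the fixed set $\set_1$, the sourceless part must be contained in $\set_1$, so it ranges inside $\mathcal{E}_{\emptyset}(\set_1)$, which is generally a proper subset of $\mathcal{E}_{\emptyset}(\set) = \mathcal{E}_{\emptyset}\cap\{\set'\subseteq\set_1\cup\set_2\}$. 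The paper obtains $|\mathcal{E}_{\emptyset}(\set)|$ by a completely different route: it is Lemma~\ref{lem:eqcard}, the identity $|\mathcal{E}_{\emptyset}(\set)| = 2^{|\set|-|V(\set)|+k(\set)}$, applied at the very end to convert the leftover Euler-characteristic power of $2$ coming from the flow weight $2^{|F|-|V(F)|}$ into the stated form. The decomposition of $\set_1(F)$ that you invoke is used in the text only to show $\sets_{A,B}\subseteq\sets_{A\triangle B}$, not in the proof of this theorem.

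The power of $2$ also needs a sharper argument than what you sketch. There are no ``twofold choices at vertices where several flow strands cross'': at an interior vertex of $V$, the alternating condition eliminates all local freedom, so the combinatorial multiplicity is not a per-vertex or per-edge product. The correct count is global, per connected component. The paper first quotients the fiber by identifying the three indistinguishable type-(1a) configurations, and shows that the resulting equivalence classes are in bijection with alternating orientations of the doubled graph $\bar\set$ (where each $\set_2$ edge is duplicated). It then counts these orientations as $2^{k(\set)-k'(\set)}$: each component not meeting $A\cup B$ contributes exactly one binary choice (fix the orientation of the outer boundary cycle and propagate inward; parity works out because all degrees in $\bar\set$ are even), while each component meeting $A\cup B$ is forced. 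Your inductive peel-off of leaf and cycle edges is a conceivable alternative, but as described it would have to reproduce exactly this component-level dichotomy as well as Lemma~\ref{lem:eqcard}, and nothing in your sketch supplies the base case or the invariant that tracks $k'(\set)$ versus $k(\set)$. As it stands, the proposal has a genuine gap in the counting step, which is precisely the heart of the theorem.
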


Before proving the theorem, we need to recall a standard result about the cardinality of $\mathcal{E}_{\emptyset}(\set)$. We give a proof for completeness.
For $\set \subseteq E$, we denote by $k(\set)$ the number of connected components of $\set$.

\begin{lemma} \label{lem:eqcard}
Let $\set\subseteq E$ be such that $\mathcal{E}_{\emptyset}(\set)$ is nonempty. Then,
\[
|\mathcal{E}_{\emptyset}(\set)|=2^{|\set|-|V(\set)|+k(\set)}.
\] 
\end{lemma}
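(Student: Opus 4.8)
The plan is to recognize $\mathcal{E}_{\emptyset}(\set)$ as a coset of the cycle space of the subgraph $\set$ over $\mathbb{F}_2$, and to count it by the standard rank-nullity argument for graphs. First I would observe that $\mathcal{E}_{\emptyset}(\set)$ is precisely the set of edge subsets $\set' \subseteq \set$ that induce a subgraph in which every vertex has even degree, i.e.\ the collection of ``even subgraphs'' (elements of the cycle space) of the graph $(V(\set),\set)$. Indeed, $\mathcal{E}_{\emptyset}$ consists of edge sets where every vertex has even degree, and intersecting with $\{\set' \mid \set' \subseteq \set\}$ restricts attention to subsets of $\set$; the hypothesis that $\mathcal{E}_{\emptyset}(\set)$ is nonempty is needed only to guarantee the count is of a nonempty set (in fact $\emptyset \in \mathcal{E}_{\emptyset}(\set)$ always, so this hypothesis is automatically satisfied, but it does no harm to keep it).

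Next I would invoke the well-known fact that the cycle space of a graph with $V(\set)$ vertices, $|\set|$ edges, and $k(\set)$ connected components is an $\mathbb{F}_2$-vector space of dimension equal to the first Betti number $|\set| - |V(\set)| + k(\set)$. A clean way to see this without quoting black boxes: pick a spanning forest $T$ of $(V(\set),\set)$, which has $|V(\set)| - k(\set)$ edges; each of the $|\set| - |V(\set)| + k(\set)$ remaining (non-tree) edges $e$ determines a unique fundamental cycle $C_e$ (the edge $e$ together with the path between its endpoints in $T$), and the map sending a subset $S$ of non-tree edges to $\triangle_{e \in S} C_e$ is a bijection onto the even subgraphs. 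Injectivity holds because $C_e$ is the only fundamental cycle containing $e$, so the non-tree part of $\triangle_{e\in S} C_e$ is exactly $S$; surjectivity holds because given any even subgraph $\set'$, letting $S$ be its set of non-tree edges, $\set' \triangle (\triangle_{e \in S} C_e)$ is an even subgraph contained in $T$, hence empty (a forest has no nonempty even subgraph). Therefore $|\mathcal{E}_{\emptyset}(\set)| = 2^{|\set| - |V(\set)| + k(\set)}$.

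I do not anticipate a serious obstacle here; this is a classical counting lemma and the only mild care needed is to make sure $V(\set)$ is interpreted as the vertex set of the subgraph induced by $\set$ (isolated vertices contribute nothing, and indeed appear in neither $|V(\set)|$ nor $k(\set)$ in a way that affects the formula), and to state the fundamental-cycle bijection precisely enough that injectivity and surjectivity are transparent. An alternative, equally short route would be to use the boundary map $\partial \colon \mathbb{F}_2^{\set} \to \mathbb{F}_2^{V(\set)}$ sending an edge set to its vector of vertex degrees mod $2$: then $\mathcal{E}_{\emptyset}(\set) = \ker \partial$, the image of $\partial$ has dimension $|V(\set)| - k(\set)$ (its image is the set of even-weight indicator vectors on each component, one linear constraint per component), and rank-nullity gives $\dim \ker \partial = |\set| - (|V(\set)| - k(\set))$, yielding the same exponent. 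I would present whichever of the two is shorter to write out cleanly.
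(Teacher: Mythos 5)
Your proof is correct and takes essentially the same route as the paper: the paper likewise fixes a maximal spanning forest $T \subseteq \set$, notes that $|\set \setminus T| = |\set| - |V(\set)| + k(\set)$, and constructs the bijection between $\mathcal{E}_{\emptyset}(\set)$ and subsets of $\set \setminus T$ via fundamental cycles (with $\set' \mapsto \set' \cap (\set \setminus T)$ as the forward map and symmetric differences of fundamental cycles as the inverse). Your remark that $\mathcal{E}_{\emptyset}(\set)$ is automatically nonempty (since $\emptyset$ belongs to it) is also correct, and your alternative rank-nullity argument is a fine variant the paper does not spell out.
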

\begin{proof} 
Consider a maximal spanning forest $T\subseteq \set$ of $\set$, and note that $|\set \setminus T| =|\set |-|V(\set)|+k(\set)$.
It is hence enough to construct a bijection between $\mathcal{E}_{\emptyset}(\set)$ and the set of subsets of $\set \setminus T$.
To this end, to each $e \in \set \setminus T$ we assign the unique cycle $C_e \subseteq (\set\setminus T) \cup \{e\}$. Note that $C_e \in \mathcal{E}_{\emptyset}(\set)$. 
The bijection is given by assigning to each $\set' \in \mathcal{E}_{\emptyset}(\set)$ the set  $\set'\cap(\set\setminus T)$, and its inverse by assigning to each 
$ \{e_1,\ldots, e_l\}\subseteq \set \setminus T$ the set $C_{e_1}\triangle \cdots \triangle C_{e_l}\in \mathcal{E}_{\emptyset}(\set)$.
\end{proof}

\begin{proof}[Proof of Theorem~\ref{thm:flowformula}]
Let $\bar \set$ be a modification of $\set$ seen as a subset of $E$ where each edge in $\set_2$ is replaced by two parallel edges. By an alternating orientation of
$\bar \set$ we mean an assignment of orientations to the edges of $\bar \set$ such that for each $v\in V$, the edges alternate in orientation around $v$.
Let $F$ be such that $\setf(F)=\set$, and let $\ec $ be its equivalence class under identifying the three local edge configurations of type (1a).  
Note that such 
equivalence classes are in one-to-one 
correspondence with alternating orientations of $\bar \set$.
We claim that the number of such alternating orientations is
\begin{align} \label{eq:compnumb}
2^{k(\set)-k'(\set)}.
\end{align}
Indeed, consider a connected component $\ccomp$ of $\bar \set$ that does not contain a vertex in $A \cup B$.
We claim that $ \kappa$ has exactly $2$ different alternating orientations.
To see this, consider the outer boundary $\partial_{\text{out}}  \ccomp$ of $ \ccomp$, i.e., the set of edges in $ \ccomp$ incident on the 
unbounded face of $ \ccomp$. Because of the alternating condition, the choice of the orientation for one edge determines
the orientation of all other edges in $\partial_{\text{out}} \ccomp$. Moreover, there are no conflicts of orientations since each vertex of $ \ccomp$ has even degree, 
and hence $\partial_{\text{out}}  \ccomp$ can be deformed to a cycle by splitting vertices incident on more than $2$ edges from $\partial_{\text{out}}  \ccomp$. Then, the 
orientations of all edges agree with the clockwise or anticlockwise order on the cycle. After orienting 
$\partial_{\text{out}} \ccomp$, one has to consider all connected components of $ \ccomp \setminus \partial_{\text{out}}  \ccomp$,
and repeat the reasoning with the only difference being that there is no more freedom of choice of the orientation (if $\partial_{\text{out}}  \ccomp$ was oriented clockwise (resp. counterclockwise), 
then all outer boundaries of the connected components of $ \ccomp \setminus \partial_{\text{out}} \ccomp$ have to be oriented counterclockwise (resp. clockwise)). One repeats this 
procedure until all edges of $ \ccomp$ are oriented. On the other hand, there is no freedom of orientation for components containing a vertex in $A \cup B$ 
since each such component has to be oriented from a vertex in $A$ to a vertex in $B$. Therefore \eqref{eq:compnumb} holds true.

We now turn to the total weight $\weight( \ec)$ of each equivalence class $\ec$, which we define to be the sum 
of weights of all flows in $\ec$. By \eqref{eq:edgeweights}, the total multiplicative contribution to $\weight( \ec)$ of configurations of type (1a) is
\[
2^3\Big(\frac {x_e}2\Big)^2 \frac{x_e}{1-x_e^2} +2x_e= \frac{2x_e}{1-x_e^2} ,
\]
where we took the factor $2^{\text{\# edges}}$ and did not take the factor $2^{|A|-\text{\# vertices}}$ from \eqref{def:flow} into account.
This contribution is therefore equal to the contribution of a configuration of type (1b). The contributions of configurations of type (2a) and (2b) also agree and are equal to
\[
2^2\Big(\frac {x_e}2\Big)\frac{x_e}{1-x_e^2} = \frac{2x^2_e}{1-x_e^2}.
\]
Hence, by \eqref{def:flow}, we can write
\begin{align*}
\weight( \ec) & =  2^{|A|-|V(\set)|}\prod_{ e \in \set_1} \frac{2x_e}{1-x_e^2}\prod_{ e \in \set_2} \frac{2x^2_e}{1-x_e^2} \\
                    &  =  2^{|A|+|\set|-|V(\set)|}\hspace*{-0.2cm}\prod_{ e \in \set_1} \frac{x_e}{1-x_e^2}\prod_{ e \in \set_2} \frac{x^2_e}{1-x_e^2}.
\end{align*}

Note that the map $\setf$ is constant on each equivalence class $\ec$, and hence, with a slight abuse of notation, we can write $\setf(\ec)$ for the value of $\setf$ 
evaluated at any representative of $\ec$.
Combining all the previous observations, and using the definition of $\IP^{A,B}_{{ \textnormal{a-flow}} }$, we can write
\begin{align*}
&\overline{\IP}^{A,B}_{{ \textnormal{a-flow}} }(\set) = \sum_{F:\  \setf(F)=\set }\IP^{A,B}_{{ \textnormal{a-flow}} }(F) \\
&= \frac1{Z^{A,B}_{\aflow} } \sum_{F:\  \setf(F)=\set } \weight(F) \\ 
&=  \frac1{Z^{A,B}_{\aflow} } \sum_{\ec:\ \setf(\ec)=\set}\weight(\ec) \\
 &=  \frac1{Z^{A,B}_{\aflow} } \sum_{\ec:\ \setf(\ec)=\set}  2^{|A|+|\set|-|V(\set)|}  \prod_{ e \in \set_1} \frac{x_e}{1-x_e^2}\prod_{ e \in \set_2} \frac{x^2_e}{1-x_e^2}\\
&= \frac1{\bar{Z}^{A,B}_{ \aflow}} \sum_{\ec:\ \setf(\ec)=\set  } 2^{|A|+|\set|-|V(\set)|}  
\prod_{ e \in \set_1}x_e \prod_{ e \in \set_2} x^2_e\prod_{e\in E \setminus \set} (1-x_e^2) \\
&= \frac1{\bar{Z}^{A,B}_{ \aflow}} 2^{k(\set)-k'(\set)} 2^{|A|+|\set|-|V(\set)|} \prod_{ e \in \set_1}x_e \prod_{ e \in \set_2} x^2_e\prod_{e\in E \setminus \set} (1-x_e^2) \\
&= \frac{1 }{\bar{Z}^{A,B}_{ \aflow}}  2^{|A|-k'(\set)} |\mathcal{E}_{\emptyset}(\set)| \prod_{e\in \set_1} x_e  \prod_{e\in \set_2} x^2_e 
   \prod_{e\in E\setminus \set}  (1-x^2_e),
\end{align*}
where the second to last equality follows from \eqref{eq:compnumb}, and the last one from Lemma~\ref{lem:eqcard}.
\end{proof}

In two special cases the measures induced from double random currents and alternating flows are the same.
\begin{corollary} \label{cor:empty}
We have that $\overline{\IP}^{\emptyset}_{{ \drcur} }=\overline{\IP}^{\emptyset,\emptyset}_{{ \textnormal{a-flow}} }$ and $\overline{\IP}^{\{a,b\}}_{{ \drcur} }=\overline{\IP}^{\{a\},\{b\}}_{{ \textnormal{a-flow}} }$
for any $a,b \in W$, $a\neq b$.
\end{corollary}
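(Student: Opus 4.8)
The plan is to read off both identities by directly comparing the explicit densities furnished by Theorem~\ref{thm:dcurrformula} and Theorem~\ref{thm:flowformula}. Up to its normalising constant, $\overline{\IP}^{A'}_{\drcur}(\set)$ equals $|\mathcal{E}_{\emptyset}(\set)|\prod_{e\in\set_1}x_e\prod_{e\in\set_2}x_e^2\prod_{e\in E\setminus\set}(1-x_e^2)$, while $\overline{\IP}^{A,B}_{\aflow}(\set)$ is this same quantity multiplied by the extra factor $2^{|A|-k'(\set)}$. Taking $(A',A,B)=(\emptyset,\emptyset,\emptyset)$ and then $(A',A,B)=(\{a,b\},\{a\},\{b\})$, it will therefore be enough to establish (i) that in each case the two measures are supported on the same set, i.e.\ $\sets_{A,B}=\sets_{A\triangle B}$, and (ii) that $2^{|A|-k'(\set)}=1$ for every $\set$ in that common set. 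Granting (i) and (ii), $\overline{\IP}^{A\triangle B}_{\drcur}$ and $\overline{\IP}^{A,B}_{\aflow}$ are probability measures on the same finite set with proportional weights, hence equal.

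For (ii), when $A=B=\emptyset$ we have $|A|=0$, and since $A\cup B=\emptyset$ no component of $\set$ meets $A\cup B$, so $k'(\set)=0$ and the factor is $1$. When $A=\{a\}$ and $B=\{b\}$ with $a\neq b$ we have $|A|=1$; moreover any $\set\in\sets_{\{a\},\{b\}}\subseteq\sets_{\{a,b\}}$ satisfies $\set_1\in\mathcal{E}_{\{a,b\}}$, i.e.\ $a$ and $b$ are the only odd-degree vertices of $\set_1$, so they lie in one connected component of $\set_1$, hence of $\set$. That component is then the unique one meeting $A\cup B$, so $k'(\set)=1$ and again $2^{|A|-k'(\set)}=1$.

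For (i), the inclusion $\sets_{A,B}\subseteq\sets_{A\triangle B}$ is part of the setup of Section~\ref{sec:flows}, and $\overline{\IP}^{\emptyset}_{\drcur}$, $\overline{\IP}^{\{a,b\}}_{\drcur}$ have full support because $\emptyset\in\mathcal{E}_{\emptyset}(\set)$ forces $|\mathcal{E}_{\emptyset}(\set)|\geq1$; so it remains to show that every $\set\in\sets_{A\triangle B}$ is of the form $\setf(F)$ for some $F\in\flows_{A,B}$. I would argue this as in the proof of Theorem~\ref{thm:flowformula}: let $\bar\set$ be the plane multigraph obtained from $\set$ by doubling every edge of $\set_2$; in the case $A=\{a\}$, $B=\{b\}$ first adjoin to it the boundary edges $(a^{\source},a)$ and $(b,b^{\sink})$ together with an auxiliary arc from $b^{\sink}$ to $a^{\source}$ drawn in the outer face. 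In both cases the resulting plane graph has all degrees even, hence is Eulerian, and therefore carries an alternating orientation (for instance via a proper $2$-colouring of its faces, as in the proof of Theorem~\ref{thm:flowformula}). Restricting such an orientation back to $\bar\set$ (after deleting the auxiliary arc) yields an alternating flow $F$ with the prescribed sources and sinks and with $\setf(F)=\set$, which gives the reverse inclusion.

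I expect step (i), namely realising every element of $\sets_{A\triangle B}$ by an alternating flow, to be the only delicate point: the care is in incorporating the boundary sources and sinks when $A=\{a\}$, $B=\{b\}$ and in invoking the fact that an Eulerian plane graph admits an alternating orientation; the remainder is routine bookkeeping with the densities of Theorems~\ref{thm:dcurrformula} and~\ref{thm:flowformula}.
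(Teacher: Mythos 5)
Correct, and essentially the paper's approach: compare the densities from Theorems~\ref{thm:dcurrformula} and~\ref{thm:flowformula}, observe that $k'(\set)$ equals $0$ (resp.\ $1$) so the factor $2^{|A|-k'(\set)}$ drops out, and prove the support equalities $\sets_{\emptyset,\emptyset}=\sets_{\emptyset}$ and $\sets_{\{a\},\{b\}}=\sets_{\{a,b\}}$. The only difference is that the paper handles the support equality by citing (forward to) Lemma~\ref{lem:revflowprop}, whereas your doubling and auxiliary-arc construction re-derives the content of that lemma for the special cases $|A|=|B|\in\{0,1\}$, which is harmless.
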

\begin{proof}
By Lemma~\ref{lem:revflowprop}, we have that $\sets_{\emptyset,\emptyset}= \sets_{\emptyset}$ and $\sets_{\{a\},\{b\}}= \sets_{\{a,b\}}$, and the statement follows from 
Theorems~\ref{thm:dcurrformula} and~\ref{thm:flowformula}.
\end{proof}

\section{Proofs of main results} \label{sec:proofs}

We need to state a few necessary lemmas.
The first one goes back to the work of Griffiths, Hurst and Sherman~\cite{GHS}, and we give a proof for completeness.
We will write $ Z^{a,b} = Z^{\{ a,b\}}$, and we define $ Z_{\rcur}^{a,b} = Z^{\emptyset}_{\rcur}$ for $a=b$.
\begin{lemma} \label{lem:currtwopoint}
For $a,b \in V$, we have
\[
\langle \sigma_a \sigma_b \rangle = \frac{Z^{a,b}_{\rcur}}{Z^{\emptyset}_{\rcur}}.
\]
\end{lemma}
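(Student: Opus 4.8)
\emph{Proof proposal.} The plan is to apply the standard power–series (random current) expansion of the partition function, simultaneously to $Z_{\ising}$ and to $Z_{\ising}\langle\sigma_a\sigma_b\rangle$, and then to divide.

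First I would expand each factor as $\exp(J_e\sigma_u\sigma_v)=\sum_{\n_e\geq 0}J_e^{\n_e}(\sigma_u\sigma_v)^{\n_e}/\n_e!$. Substituting into
\[
Z_{\ising}\langle\sigma_a\sigma_b\rangle=\sum_{\sigma\in\{-1,1\}^V}\sigma_a\sigma_b\prod_{\{u,v\}\in E}\exp(J_{\{u,v\}}\sigma_u\sigma_v)
\]
and interchanging the two sums (legitimate since $\sum_{\sigma}\sum_{\n}\weight(\n)=2^{|V|}\prod_e e^{J_e}<\infty$, so the double sum is absolutely convergent), one gets
\[
Z_{\ising}\langle\sigma_a\sigma_b\rangle=\sum_{\n:E\to\{0,1,2,\dots\}}\weight(\n)\sum_{\sigma\in\{-1,1\}^V}\sigma_a\sigma_b\prod_{v\in V}\sigma_v^{d_{\n}(v)},
\]
where $d_{\n}(v)=\sum_{u:\{u,v\}\in E}\n_{\{u,v\}}$ and $\weight(\n)=\prod_{e}J_e^{\n_e}/\n_e!$ as in~\eqref{eq:currentweight}.

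Next I would evaluate the inner spin sum. Assume first $a\neq b$; then $\sigma_a\sigma_b\prod_v\sigma_v^{d_{\n}(v)}=\prod_v\sigma_v^{d_{\n}(v)+\mathds{1}_{\{v\in\{a,b\}\}}}$, and since $\sum_{s\in\{-1,1\}}s^m$ equals $2$ for $m$ even and $0$ for $m$ odd, the sum over $\sigma$ equals $2^{|V|}$ precisely when $d_{\n}(v)+\mathds{1}_{\{v\in\{a,b\}\}}$ is even for every $v$ — i.e.\ when $d_{\n}$ is odd exactly at $a$ and $b$, i.e.\ $\partial\n=\{a,b\}$ — and vanishes otherwise. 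Hence $Z_{\ising}\langle\sigma_a\sigma_b\rangle=2^{|V|}\sum_{\n\in\currs_{\{a,b\}}}\weight(\n)=2^{|V|}Z^{a,b}_{\rcur}$. Running the same computation with the factor $\sigma_a\sigma_b$ replaced by $1$ (equivalently, taking $a=b$) gives $Z_{\ising}=2^{|V|}Z^{\emptyset}_{\rcur}$, and dividing the two identities yields the claim. In the case $a=b$ the left-hand side is $\langle 1\rangle=1$ while the right-hand side is $Z^{\emptyset}_{\rcur}/Z^{\emptyset}_{\rcur}=1$ by the convention $Z^{a,a}_{\rcur}=Z^{\emptyset}_{\rcur}$, so the formula holds there as well.

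I do not anticipate any real obstacle: the only points needing care are the justification of the interchange of summations (immediate, as all summands are nonnegative) and the parity bookkeeping that identifies the surviving currents as exactly those with source set $\{a,b\}$, i.e.\ the elements of $\currs_{\{a,b\}}$.
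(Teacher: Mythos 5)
Your proposal is correct and follows essentially the same route as the paper: expand the exponentials as power series, interchange the sums over spins and currents, and observe that the spin sum kills every current except those with source set $\{a,b\}$ (resp.\ $\emptyset$), yielding $2^{|V|}Z^{a,b}_{\rcur}$ (resp.\ $2^{|V|}Z^{\emptyset}_{\rcur}$). The only cosmetic difference is that you treat numerator and denominator as two separate identities before dividing, and you spell out the absolute-convergence justification for the interchange; both of these are harmless and the parity bookkeeping matches the paper's exactly.
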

\begin{proof} If $a=b$, then the equality is trivial.  Otherwise, for a vertex $v$ and a current $\n$, we define $\n_v = \sum_{u : \ \{u,v\} \in E} \n_{\{u,v\}}$, and we have
\begin{align*} 
\langle \sigma_a \sigma_b \rangle &=\frac{\sum \limits_{\sigma \in  \{ -1,1\}^V}  \sigma_a \sigma_b 
\prod \limits_{\{u,v\}\in E} \exp(J_{\{u,v \}} \sigma_u \sigma_v)}{\sum \limits_{\sigma \in  \{ -1,1\}^V}  
\prod \limits_{\{u,v\}\in E} \exp(J_{\{u,v \}} \sigma_u \sigma_v)} \\
&= \frac{\sum \limits_{\sigma \in  \{ -1,1\}^V}  \sigma_a \sigma_b  
\prod \limits_{\{u,v\}\in E} \sum \limits_{k=0}^{\infty} (J_{\{u,v \}})^k \sigma^k_u \sigma^k_v/k!}{\sum \limits_{\sigma \in  \{ -1,1\}^V}  
\prod \limits_{\{u,v\}\in E}  \sum \limits_{k=0}^{\infty} (J_{\{u,v \}})^k \sigma^k_u \sigma^k_v/k!} \\
&= 
 \frac{\sum \limits_{\sigma \in  \{ -1,1\}^V}  \sigma_a \sigma_b  
 \sum \limits_{\n \in \currs} w(\n) \prod \limits_{v\in V} \sigma^{\n_v}_v }{\sum \limits_{\sigma \in  \{ -1,1\}^V} 
 \sum \limits_{\n \in \currs} w(\n) \prod \limits_{v\in V} \sigma^{\n_v}_v } \\
 &= 
 \frac{\sum \limits_{\sigma \in  \{ -1,1\}^V}  
 \sum \limits_{\n \in \currs} w(\n) \prod \limits_{v\in V} \sigma^{\n_v+\mathds{1}\{v\in \{a,b \}\}}_v }{\sum \limits_{\sigma \in  \{ -1,1\}^V} 
 \sum \limits_{\n \in \currs} w(\n) \prod \limits_{v\in V} \sigma^{\n_v}_v } \\
 &= 
  \frac{
 2^{|V|} \sum \limits_{\n \in \currs_{\{ a,b \}}} w(\n) }{
 2^{|V|} \sum \limits_{\n \in \currs_{\emptyset}} w(\n)  } \\
 & = \frac{Z^{a,b}_{\rcur}}{Z^{\emptyset}_{\rcur}}.
\end{align*}
The second to last equality holds true since the only currents that survive the summation over the symmetric set $\{-1,1\}^{|V|}$ 
are the ones for which the exponent of $\sigma_v$ is even at every vertex. 
\end{proof}
The next lemma expresses the two-point spin correlation function as a ratio of partition functions of alternating flows.
Recall that we defined $\sets_{A,B} \subseteq \sets_{A \triangle B}$ to be the image of $\flows_{A,B}$ under the map $\setm$ from Section~\ref{sec:flows}.
\begin{lemma} \label{lem:flowtwopoint}
For $a,b \in W$, we have
\[
\langle \sigma_a \sigma_b \rangle =   \frac{Z^{a,b}_{\aflow}}{Z^{\emptyset}_{\aflow}}.
\]
\end{lemma}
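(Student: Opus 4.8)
The plan is to combine the two formulas already established: Lemma~\ref{lem:currtwopoint}, which gives $\langle \sigma_a \sigma_b \rangle = Z^{a,b}_{\rcur}/Z^{\emptyset}_{\rcur}$, and the relationship between the induced double random current measure and the induced alternating flow measure provided by Corollary~\ref{cor:empty}. Concretely, Corollary~\ref{cor:empty} asserts that $\overline{\IP}^{\emptyset}_{\drcur} = \overline{\IP}^{\emptyset,\emptyset}_{\aflow}$ and $\overline{\IP}^{\{a\},\{b\}}_{\aflow} = \overline{\IP}^{\{a,b\}}_{\drcur}$. Two probability measures that are equal must, in particular, have equal normalizing constants for their (proportional) unnormalized weights. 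So first I would identify, for each of the two boundary conditions $\emptyset$ and $\{a,b\}$, the common unnormalized weight of a configuration $\set$ appearing on both sides.

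Comparing Theorem~\ref{thm:dcurrformula} with $A = \{a,b\}$ against Theorem~\ref{thm:flowformula} with $(A,B) = (\{a\},\{b\})$: both have the same edge-product $\prod_{e\in\set_1} x_e \prod_{e\in\set_2} x_e^2 \prod_{e\in E\setminus\set}(1-x_e^2)$ and the same combinatorial factor $|\mathcal{E}_\emptyset(\set)|$, since in the flow case $|A| = 1$ and $k'(\set) = 1$ forces the extra factor $2^{|A|-k'(\set)} = 2^0 = 1$ (every $\set \in \sets_{\{a\},\{b\}}$ has exactly one component containing $a$ and $b$, as these lie in the same component by the flow/path structure). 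Hence the unnormalized weights agree pointwise on the common support $\sets_{\{a,b\}} = \sets_{\{a\},\{b\}}$ (this equality of supports is exactly what Corollary~\ref{cor:empty} relies on via Lemma~\ref{lem:revflowprop}). Equality of the two probability measures then forces equality of the partition functions, i.e.\ $\bar Z^{\{a,b\}}_{\drcur} = \bar Z^{\{a\},\{b\}}_{\aflow}$. Unwinding the definitions $\bar Z^{\{a,b\}}_{\drcur} = Z^{a,b}_{\rcur} Z^{\emptyset}_{\rcur} \prod_e (1-x_e^2)$ and $\bar Z^{\{a\},\{b\}}_{\aflow} = Z^{\{a\},\{b\}}_{\aflow} \prod_e(1-x_e^2)$, the common factor $\prod_e(1-x_e^2)$ cancels, giving $Z^{a,b}_{\rcur} Z^{\emptyset}_{\rcur} = Z^{\{a\},\{b\}}_{\aflow}$.

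Doing the same for the empty boundary condition: Theorem~\ref{thm:dcurrformula} with $A = \emptyset$ and Theorem~\ref{thm:flowformula} with $(A,B) = (\emptyset,\emptyset)$ again have matching unnormalized weights (here $|A| = 0$ and $k'(\set) = 0$ since no component contains a boundary vertex, so the extra power of $2$ is again trivial), so equality of the measures yields $\bar Z^{\emptyset}_{\drcur} = \bar Z^{\emptyset,\emptyset}_{\aflow}$, i.e.\ $(Z^{\emptyset}_{\rcur})^2 \prod_e(1-x_e^2) = Z^{\emptyset}_{\aflow} \prod_e(1-x_e^2)$, hence $(Z^{\emptyset}_{\rcur})^2 = Z^{\emptyset}_{\aflow}$. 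Finally I would assemble:
\[
\frac{Z^{a,b}_{\aflow}}{Z^{\emptyset}_{\aflow}} = \frac{Z^{a,b}_{\rcur} Z^{\emptyset}_{\rcur}}{(Z^{\emptyset}_{\rcur})^2} = \frac{Z^{a,b}_{\rcur}}{Z^{\emptyset}_{\rcur}} = \langle \sigma_a \sigma_b \rangle,
\]
using Lemma~\ref{lem:currtwopoint} in the last step. For the degenerate case $a = b$ one checks directly that both sides equal $1$ (the empty flow is the unique element of $\flows_{\{a\},\{a\}}$ contributing, or more simply $\sets_{\{a\},\{a\}} = \sets_\emptyset$ and the partition functions coincide).

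The only subtle point — and the step I would be most careful about — is justifying that equality of the two \emph{normalized} probability measures forces equality of the normalizing constants. This is immediate once one observes that the unnormalized weights (the numerators in Theorems~\ref{thm:dcurrformula} and~\ref{thm:flowformula}) are literally the same function of $\set$ on the common support, and that this support is nonempty; then the ratio of partition functions equals the ratio of unnormalized weights at any single $\set$, which is $1$. So there is no real obstacle, just bookkeeping: the content is entirely front-loaded into Theorems~\ref{thm:dcurrformula} and~\ref{thm:flowformula} and Corollary~\ref{cor:empty}, and this lemma is the short payoff that translates the distributional identity into an identity of flow partition functions matching the random-current one.
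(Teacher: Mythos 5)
Your proof is correct and follows essentially the same route as the paper: compare the induced measures from Theorems~\ref{thm:dcurrformula} and~\ref{thm:flowformula} on the common support (using $|A|-k'(\set)=0$ in both the $\{a,b\}$ and $\emptyset$ cases), deduce the partition-function identities $Z^{a,b}_{\aflow}=Z^{a,b}_{\rcur}Z^{\emptyset}_{\rcur}$ and $Z^{\emptyset}_{\aflow}=(Z^{\emptyset}_{\rcur})^2$, and close with Lemma~\ref{lem:currtwopoint}. The paper invokes Lemma~\ref{lem:revflowprop} directly rather than routing through Corollary~\ref{cor:empty}, but the underlying argument is the same; your explicit handling of $a=b$ is a harmless extra.
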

\begin{proof}
Note that for each $F \in \flows_{\{a\},\{b\}}  $, the graph $\setm(F)$ contains exactly $k'=1$ connected component that connects $a$ and $b$. Moreover, by Lemma~\ref{lem:revflowprop} we have that
$\sets_{\{a\},\{b\}} = \sets_{\{a\} \triangle \{ b\}}$, and $\sets_{\emptyset,\emptyset} = \sets_{\emptyset}$.
Hence, by comparing the formulas in Theorem~\ref{thm:dcurrformula} and~\ref{thm:flowformula}, and using
Lemma~\ref{lem:currtwopoint}, we get
\[
\frac{Z^{a,b}_{\aflow}}{Z^{\emptyset}_{\aflow}} = 2^{|\{ a\}|}2^{-k'}\frac{Z^{a,b}_{\rcur}Z^{\emptyset}_{\rcur}}{Z^{\emptyset}_{\rcur}Z^{\emptyset}_{\rcur}} = \frac{Z^{a,b}_{\rcur}}{Z^{\emptyset}_{\rcur}}=
\langle \sigma_a \sigma_b \rangle. \qedhere
\]
\end{proof}

The next lemma describes an alternating property of sources and sinks in a connected component of an alternating flow.
\begin{lemma} \label{lem:flowprop}
Let $F \in  \flows_{A,B}$, and let $\ccomp\subset F$ be one of its connected components. 
Then, the sources and sinks of $\ccomp$ interlace, i.e., as one goes around the external face, the vertices in $V(\ccomp) \cap (A^{\source} \cup B^{\sink})$ alternate
between the sources in $A^{\source}$ and the sinks in $B^{\sink}$. 
\end{lemma}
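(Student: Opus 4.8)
The plan is to argue by contradiction and exploit the alternating condition along the boundary walk of a suitable face.

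First I would record two consequences of the flow condition. Every vertex of $\ccomp$ has equal in‑ and out‑degree, so every vertex has even degree; and summing $(\mathrm{indeg}-\mathrm{outdeg})$ over the vertices of $\ccomp$ (each edge contributing $+1$ and $-1$) shows that $\ccomp$ has as many sources as sinks. Hence, if the sources and sinks of $\ccomp$ did not interlace there would be two terminals $z,z'$ of $\ccomp$ that are consecutive among all terminals of $\ccomp$ and of the same type; by the symmetric statement obtained on reversing the orientation of every edge (which again yields an alternating flow, with sources and sinks exchanged) we may assume $z$ and $z'$ are both sources. Let $\alpha$ be the arc of the outer face of $\vec G$ between $z$ and $z'$ containing no terminal of $\ccomp$, and let $f$ be the face of $\ccomp$ — the component of the complement of $\ccomp$ in the closed disc carrying $\vec G$ — that contains $\alpha$.

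The key step is to track how edge orientations behave along the boundary walk of $f$. This walk alternates between arcs of the boundary of the disc lying in $\overline f$ (each joining two consecutive terminals of $\ccomp$) and maximal ``runs'' of edges of $\ccomp$. At every vertex $v\in V$ met by the walk, the edge by which the walk enters $v$ and the edge by which it leaves $v$ are consecutive in the cyclic order at $v$, so by the alternating condition exactly one of them points towards $v$; a one–line check then shows that the incoming edge is traversed in the direction of its orientation if and only if the outgoing one is. Thus ``traversed with the flow'' is constant along each run. A run emanating from a source $z$ begins with the external edge of $z$, oriented away from $z$ and traversed into $\ccomp$, hence with the flow; following the run to its other end, the final edge is the external edge of the terminal reached there, traversed towards it, so it points towards it, forcing that terminal to be a sink. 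In particular the two ends of every run on $\partial f$ are of opposite type, while $z$ and $z'$ are joined on $\partial f$ by the arc $\alpha$, not by a run.

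The last step — turning this into a contradiction — is the main obstacle. The ``opposite type across each run'' property alone is consistent with $z,z'$ both being sources when $f$ meets the boundary of the disc along several arcs, so one must bring in more structure. I expect to combine the above with the stream function (flow potential) of $\ccomp$: it is well defined by conservation at the interior vertices, and by the full alternating condition it takes only two consecutive values on the faces surrounding any vertex, which pins down its value on $f$ relative to the arcs adjacent to $z$ and $z'$ and rules out the offending configuration. An alternative is an induction on the number of edges (or faces) of $\ccomp$, contracting a flow‑line obtained as above. The case of two consecutive sinks then follows by the same argument or by applying the proven case to the reversed flow, which completes the proof.
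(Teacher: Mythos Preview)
Your core observation—that along a maximal stretch of edges of $\ccomp$ on the boundary of a face the ``traversed with the flow'' property is preserved, so the two terminal endpoints have opposite types—is exactly the content of the paper's proof. The paper simply uses it directly rather than by contradiction: start at a source, walk counterclockwise around the external face of $\ccomp$ taking rightmost turns; the alternating condition forces every edge to be traversed with its orientation until you hit the next terminal, which must therefore be a sink; restart from that sink and the edges are now against the traversal direction, so the next terminal is a source; and so on around.

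The gap in your write-up is not a missing idea but a missing one-line observation that you already have the ingredients for. You worry that the face $f$ containing $\alpha$ might meet the disc boundary along several arcs, which would prevent the contradiction. It cannot: $\ccomp$ is \emph{connected}, and $z,z'$ are by assumption consecutive terminals of $\ccomp$ along the boundary circle. Any path in the closed disc from $\alpha$ to a different boundary arc must pass through a terminal of $\ccomp$, hence through $\ccomp$; so the face of $\ccomp$ containing $\alpha$ meets the boundary circle only along $\alpha$. Consequently $\partial f$ consists of the single arc $\alpha$ and a single run from $z'$ back to $z$, and your own run argument forces $z$ and $z'$ to be of opposite type—contradiction. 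No stream function, no induction is needed; once you add that connectedness remark, your proof and the paper's are the same argument, yours phrased by contradiction and the paper's phrased directly.
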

\begin{proof}
Note that $\ccomp$ is an alternating flow itself.
Take a source vertex $v \in V(\ccomp) \cap A^{\source} $, and traverse the edges of the external face of $\ccomp$ in a counterclockwise order.
Since at each vertex in $V$ that you visit, you take the rightmost possible turn, and since $\ccomp$ is alternating, all the directed edges are aligned with the direction of traversal until
you encounter the consecutive vertex in $v'\in V(\ccomp) \cap (A^{\source} \cup B^{\sink})$.  Since $v'$ is either a sink or a source, and since there exists an edge
directed towards $v'$, it must be a sink. An analogous argument can be made when starting at $v'$ but this time the edges are directed against the direction of traversal. 
Hence, the next vertex encountered in $V(\ccomp) \cap (A^{\source} \cup B^{\sink})$ is again a source, and the lemma is proved.
\end{proof}

The following is the reverse of the lemma above.
\begin{lemma} \label{lem:revflowprop}
Let $\set \in \sets_{A \triangle B}$ be such that for each connected component $\ccomp$ of $\set$, the vertices in $A^{\source} \cup B^{\sink}$ 
that are adjacent to $V(\ccomp)$ alternate between the sources in $A^{\source}$ and the sinks in $B^{\sink}$ as one goes around the external face. 
Then, there exists $F \in \flows_{A,B}$ such that $\setm(F)=\set$.
\end{lemma}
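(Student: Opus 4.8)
The plan is to build the flow $F$ constructively, component by component of $\set$, exploiting the alternating condition on the vertices of $V$ together with the interlacing hypothesis on the adjacent sources and sinks. Fix a connected component $\ccomp$ of $\set$. First I would handle the edges of $\set_2$: recall that an edge $e \in \set_2$ corresponds to a local configuration of type (2a) or (2b), so in $\vec G$ it will be realized by a pair of parallel flow edges with opposite orientations (one middle edge and one side edge, or two side edges). Thus, as in the proof of Theorem~\ref{thm:flowformula}, it is convenient to pass to the doubled graph $\bar\set$ where each edge of $\set_2$ is replaced by two parallel copies; constructing $F$ then amounts to choosing an alternating orientation of $\bar\set$, i.e., an orientation in which, around every $v \in V$, incoming and outgoing edges alternate, and then translating each oriented edge of $\bar\set$ back into the corresponding middle/side edge of $\vec G$ (for edges of $\set_1$, type (1a) or (1b), one also picks a representative of the type-(1a) equivalence class arbitrarily).

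The heart of the argument is therefore: \emph{every connected component $\ccomp$ of $\bar\set$ admits an alternating orientation that is compatible with the prescribed sources and sinks adjacent to it.} I would argue exactly as in the displayed computation of \eqref{eq:compnumb} in the proof of Theorem~\ref{thm:flowformula}, but run in reverse. If $\ccomp$ is a sourceless/sinkless component (no vertex of $A^\source \cup B^\sink$ adjacent to it), then every vertex of $\ccomp$ has even degree in $\set$ viewed as a subgraph of $G$ restricted to that component — this is because $\set_1(F)\in\mathcal E_{A\triangle B}$ forces the odd-degree vertices of $\set_1$ to be precisely $A\triangle B$, and a sourceless component contains none of them, while doubling the $\set_2$-edges keeps all degrees even — so one orients the outer boundary cycle consistently (clockwise, say), and then recurses inward, flipping orientation at each nested layer; the alternating condition is automatically satisfied because each vertex, having even degree, splits into an even cycle and the clockwise/counterclockwise alternation around it is exactly the alternating condition. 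For a component $\ccomp$ adjacent to some sources and sinks, the interlacing hypothesis is what makes it work: I would pick one adjacent source $v^\source$, route a directed path from $v$ into $\ccomp$ and across it toward the next adjacent boundary vertex along the outer face; by interlacing that next vertex is a sink $v'^\sink$, and I orient the path from $v$ to $v'$; repeating around the outer boundary of $\ccomp$ uses up all adjacent sources/sinks in matched source-then-sink pairs and orients the whole outer boundary without conflict (each interior vertex of $V$ on this boundary sees the path pass straight through, contributing one in- and one out-edge, consistent with alternation), and then one recurses on the interior components of $\ccomp$ minus its outer boundary exactly as in the sourceless case. This produces an orientation of $\bar\set$ that, around every $v\in V$, alternates, and that, at the boundary, has in-edges exactly at $B^\sink$-adjacent vertices and out-edges at $A^\source$-adjacent ones; adding the weight-$1$ edges $(w^\source,w)$ and $(w,w^\sink)$ for $w$ with a vertex of $\ccomp$ adjacent to $w^\source$ or $w^\sink$ completes $F$, and by construction $V(F)\cap W^\source = A^\source$, $V(F)\cap W^\sink = B^\sink$, so $F\in\flows_{A,B}$ and $\setm(F)=\set$.

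The step I expect to be the main obstacle is verifying that the boundary orientation produced by going around the outer face of $\ccomp$ closes up \emph{without conflict} — that is, that when one returns to the starting source after exhausting all adjacent sources and sinks, the forced orientations on the last stretch of outer boundary agree with the initial choice. This is where the interlacing hypothesis is used in an essential way (an equal, alternating number of sources and sinks is exactly what guarantees the parity works out so the cycle is consistently two-colorable into "path-forward" arcs), and it mirrors, in reverse, the conflict-free argument in the proof of Theorem~\ref{thm:flowformula}; I would make this precise by deforming $\partial_{\text{out}}\ccomp$ to a cycle (splitting high-degree vertices as in that proof) on which the adjacent sources and sinks sit in alternating cyclic order, whence the cycle decomposes canonically into arcs from a source to the next sink, each oriented forward, and consistency is immediate. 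The remaining verifications — that the local configurations so obtained are genuinely of types (1a)/(1b) on $\set_1$ and (2a)/(2b) on $\set_2$, hence $\setf(F) = \set$, and that the alternating condition holds at every $v \in V$ including those where several arcs and nested cycles meet — are routine bookkeeping given the layer-by-layer construction.
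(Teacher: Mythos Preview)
Your strategy is essentially the paper's: pass to the doubled graph $\bar\set$, reduce the problem to finding an alternating orientation component by component, and handle sourceless components exactly as in the proof of Theorem~\ref{thm:flowformula}. The divergence, and the gap, is in how you treat a component $\ccomp$ that does meet $A^{\source}\cup B^{\sink}$.

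Your prescription is to go around $\partial_{\text{out}}\ccomp$, orienting each arc ``from a source to the next sink'' forward, and you assert this ``orients the whole outer boundary without conflict''. But when sources and sinks alternate around the outer face, only half of the arcs of $\partial_{\text{out}}\ccomp$ run from a source-adjacent vertex to a sink-adjacent vertex; the other half run from a sink-adjacent vertex to the next source-adjacent vertex, and your description never says what to do with those. If you orient them in the same cyclic direction as the source-to-sink arcs, alternation fails at the sink-adjacent vertex $w$ (the connector $w\to w^{\sink}$ and the first edge of the next arc are both outgoing and adjacent in the cyclic order). If you leave them for the recursion on the interior, the remainder is no longer the complement of a full boundary cycle and the layered argument from Theorem~\ref{thm:flowformula} does not apply as stated. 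Your remark about the cycle being ``two-colorable into path-forward arcs'' hints at the right fix---orient each arc from its source-adjacent end to its sink-adjacent end, reversing the cyclic direction on sink-to-source arcs---but this is not what you actually wrote, and checking that this choice is consistent at the marked vertices (especially at $w\in A\cap B$) still needs to be done.

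The paper sidesteps this entirely with one clean trick: after forming $\bar\ccomp$ (doubled edges plus connector edges to $A^{\source}\cup B^{\sink}$), it adds $|V(\bar\ccomp)\cap(A^{\source}\cup B^{\sink})|/2$ auxiliary edges in the outer face, pairing each source with a consecutive sink. The resulting graph $\bar\ccomp'$ has \emph{every} vertex of even degree, so the sourceless argument from Theorem~\ref{thm:flowformula} applies verbatim and produces an alternating orientation; one only has to pick the global sign so the auxiliary edges point from sink to source, and then delete them. This reduces the boundary case to the case you already know how to handle, with no separate analysis of arcs.
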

\begin{proof}
Fix a connected component $\ccomp$ of $\set$. Similarly to the proof of Theorem~\ref{thm:flowformula}, we consider a graph $\bar \ccomp $ 
where each edge of $\ccomp \cap \set_2$ is replaced by two parallel edges, and moreover,
the edges connecting each vertex in $V(\ccomp) \cap W$ to the corresponding vertices in $A^{\source} \cup B^{\sink}$ are added. It is now enough to show that there exists an alternating orientation of 
$\bar \ccomp$ where each vertex in $V(\bar \ccomp) \cap A^{\source}$ becomes a source and each vertex in $V(\bar \ccomp) \cap B^{\sink}$ becomes a sink. By the assumption on $\ccomp$, we can 
add (in one of two possible ways) $|V(\bar \ccomp) \cap (A^{\source}\cup B^{\sink})|/2$ vertex-disjoint edges connecting in 
pairs consecutive vertices in $V(\bar \ccomp) \cap A^{\source}$ and $V(\bar \ccomp) \cap B^{\sink}$. We call the resulting graph $\bar \ccomp'$.
It is now enough to construct an alternating orientation of $\bar \ccomp'$ such that each of the additional edges is directed from a vertex in $V(\bar \ccomp) \cap B^{\sink}$ 
to a vertex in $V(\bar \ccomp) \cap A^{\source}$, 
and then restrict the orientation to $\bar \ccomp$. To this end, note that each vertex
of $\bar \ccomp'$ has even degree. Therefore, as in the proof Theorem~\ref{thm:flowformula}, the outer boundary of $\bar \ccomp'$ can be deformed to a cycle, and can be directed 
clockwise or counterclockwise so that the additional edges are directed properly. We can then orient the rest of $\bar \ccomp'$ consistently as in 
Theorem~\ref{thm:flowformula}, and the lemma is proved.
\end{proof}

The next lemma is an adaptation of a result of~\cite{talaska}.
\begin{lemma} \label{lem:tposflow}
\cite[Corollary 4.3]{talaska} \label{lem:talaska} Let ${N}^{A, B}$ be as in Theorem~\ref{thm:positivity}. We have
\[ 
\det {N}^{A, B} = \frac{Z^{A,B}_{\aflow}}{Z^{\emptyset}_{\aflow}}.
\]
\end{lemma}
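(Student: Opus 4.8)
The plan is to obtain Lemma~\ref{lem:tposflow} as a translation of \cite[Corollary~4.3]{talaska} through the dictionary set up in Section~\ref{sec:flows}. Talaska's corollary computes every maximal minor of the boundary measurement matrix of a planar directed network with boundary sources and sinks as a weighted sum over alternating flows, the weights being exactly those of \eqref{def:flow}; it is the alternating-flow analogue of the Lindstr\"om--Gessel--Viennot lemma. Since the directed modification $\vec G$, with the edge weights \eqref{eq:edgeweights} and the sources $w^{\source}$ and sinks $w^{\sink}$ placed as in Figure~\ref{fig:directedgraph}, is precisely a network of the kind considered in \cite{talaska}, the proof reduces to two points: (i) recognising the entries of $N^{A}$ as the (signed) boundary measurements of $\vec G$, and (ii) checking that the sign exponent $s(i,j)$ agrees with the sign convention of \cite{talaska}.

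For (i), Lemma~\ref{lem:flowtwopoint} gives $\langle \sigma_{w}\sigma_{w'}\rangle = Z^{w,w'}_{\aflow}/Z^{\emptyset}_{\aflow}$ for all $w,w'\in W$, where $Z^{w,w'}_{\aflow}$ is by definition the $\weight$-sum over the alternating flows on $\vec G$ whose source is the copy of $w$ in $W^{\source}$ and whose sink is the copy of $w'$ in $W^{\sink}$. These ratios are exactly the boundary measurements of $\vec G$: the normalisation by $Z^{\emptyset}_{\aflow}$, the weighted count of sourceless alternating flows (equal to $Z^{\emptyset,\emptyset}_{\aflow}$ by Lemma~\ref{lem:revflowprop}), matches the normalisation built into the definition of the boundary measurement map in \cite{postnikov, talaska}. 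Hence $N^{A}$ is, up to the sign prefactors, the $k\times n$ boundary measurement matrix of $\vec G$ with source set $A$, and $\det N^{A,B}$ is, up to a global sign, the maximal minor indexed by the sink set $B$. Talaska's corollary identifies this minor with $\sum_{F\in \flows_{A,B}} \weight(F)/Z^{\emptyset}_{\aflow} = Z^{A,B}_{\aflow}/Z^{\emptyset}_{\aflow}$; the fact that the right-hand side carries a single factor $Z^{\emptyset}_{\aflow}$ in the denominator, rather than one per row as a naive expansion of the determinant would suggest, is precisely the determinantal cancellation performed by Talaska's argument.

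The remaining point (ii) is where the real work lies, and I expect the sign bookkeeping to be the main obstacle. In \cite{talaska} the sign attached to the $(i,j)$ entry of the boundary measurement matrix is determined by the cyclic order of the $2n$ vertices $\{w^{\source}:w\in W\}\cup\{w^{\sink}:w\in W\}$ around the outer face of $\vec G$, which in turn is governed by the partition $W=W_{\circ}\cup W_{\bullet}$ via the placement rule of Section~\ref{sec:flows}: $w^{\sink}$ immediately precedes $w^{\source}$ when $w\in W_{\circ}$ and immediately follows it when $w\in W_{\bullet}$. One has to verify that, under this placement, Talaska's exponent for the entry from $w_{l_i}^{\source}$ to $w_{j}^{\sink}$ --- essentially the number of sources met between $w_{l_i}^{\source}$ and $w_{j}^{\sink}$ along the boundary --- equals $s(i,j)=|\mathcal{I}_{l_i,j}\cap\{l_1,\dots,l_k\}| + \mathds{1}_{\{w_j\in A_{\circ},\,j<l_i\}} + \mathds{1}_{\{w_j\in A_{\bullet},\,j>l_i\}}$: the first term counts the sources $w_{l_1}^{\source},\dots,w_{l_k}^{\source}$ strictly between $w_{l_i}$ and $w_j$ on the outer face, while the two indicator corrections account for the case $w_j\in A$, where the source $w_j^{\source}$ sits on one or the other side of $w_j^{\sink}$ depending on whether $w_j\in W_{\circ}$ or $W_{\bullet}$ and on which side of $w_{l_i}$ the vertex $w_j$ lies. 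This verification is of the same nature as, and can be carried out along the lines of, \cite[Proposition~5.2]{postnikov} or \cite[Proposition~2.12]{talaska} (already cited after \eqref{eq:nicesign1}); once it is settled the lemma follows at once.
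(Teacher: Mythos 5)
Your approach is essentially the same as the paper's: reduce to \cite[Corollary~4.3]{talaska} by (i) identifying the entries of $N^A$, up to sign, with the boundary measurement matrix of $\vec G$, and (ii) checking that $s(i,j)$ reproduces Talaska's sign convention under the source/sink placement rule determined by $W_\circ, W_\bullet$.

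One point in (i) is stated loosely enough to mislead. You write that the ratios $Z^{w,w'}_{\aflow}/Z^{\emptyset}_{\aflow}$ ``are exactly the boundary measurements'' because the normalisation by $Z^{\emptyset}_{\aflow}$ ``matches the normalisation built into the definition of the boundary measurement map.'' That is not how the map is defined: in \cite{postnikov} and \cite{talaska} the boundary measurement matrix entry is defined as a signed generating function of directed walks from a source to a sink (with $-1$ to the power of the walk's winding), with no denominator at all. The fact that this signed walk sum equals the ratio of alternating-flow partition functions $Z^{w,w'}_{\aflow}/Z^{\emptyset}_{\aflow}$ is a theorem --- \cite[Corollary~5.3]{talaska} --- and the paper explicitly invokes it at this step before applying Lemma~\ref{lem:flowtwopoint}. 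So your chain should read: boundary measurement entry $\stackrel{\text{\cite[Cor.~5.3]{talaska}}}{=} Z^{w,w'}_{\aflow}/Z^{\emptyset}_{\aflow} \stackrel{\text{Lem.~\ref{lem:flowtwopoint}}}{=} \langle\sigma_w\sigma_{w'}\rangle$, rather than presenting the first equality as definitional. With that citation supplied, the rest of your argument matches the paper's proof, including the deferral of the sign verification to a Postnikov/Talaska-style counting of sources strictly between $w_{l_i}^{\source}$ and $w_j^{\sink}$.
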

\begin{proof}
The proof is a matter of translation of the results of \cite{talaska} to our setting. To this end,
consider the boundary measurement matrix $M$ from Definition 2.4 of \cite{talaska} defined for $\vec G$ with weights as in Section~\ref{sec:flows}. 
By Corollary~5.3 of~\cite{talaska}, the entry corresponding to source $a^{\source}$ and sink $b^{\sink}$ is equal to ${Z^{a,b}_{\aflow}}/{Z^{\emptyset}_{\aflow}}$,
and hence, by Lemma~\ref{lem:flowtwopoint}, to $\langle \sigma_a \sigma_b \rangle$. Therefore, the boundary measurement matrix for $\vec G$ is the matrix of Ising boundary spin correlation functions.

By Corollary~4.3 of~\cite{talaska}, it is hence enough to check that the signs in $N^A$ agree with those in the matrix in Corollary~4.3 of~\cite{talaska}
where only the columns corresponding to sinks are considered.
In~\cite{talaska}, the sign of the entry corresponding to source $a^{\source}$ and sink $b^{\sink}$ is negative if the number of sources strictly between $a^{\source}$ and $b^{\sink}$
in a fixed clockwise order is odd, and it is positive otherwise.
By our choice, for every $w\in W_{\circ}$ (resp.\ $w\in W_{\bullet}$), the sink $w^{\sink}$ immediately precedes (resp.\ succeeds) the source 
$w^{\source}$ in the counterclockwise order. One can easily check that as a result, $s(i,j)$ from the definition of ${N}^{A, B}$ 
counts the number of sources strictly between the source $w^{\source}_{l_i} \in A^{\source}$ and the sink $w^{\sink}_{j} \in B^{\sink}$.
Therefore, the signs in the definition of $N^A$ agree with those in \cite{talaska} (up to changing a clockwise to a counterclockwise order).
Hence, the result follows from Corollary~4.3 of~\cite{talaska}.
\end{proof}

\begin{remark}
The proof of Corollary~4.3 of~\cite{talaska} uses the signed walk interpretation of the boundary measurement matrix of Postnikov~\cite{postnikov}.
One can compare this with the Kac--Ward representation of the planar Ising model~\cite{KacWard}, where
the Ising boundary two-point functions can be expressed as partition functions of signed non-backtracking walks on the undirected graph $G$ 
(see e.g.\ \cite{KLM}, or \cite{Lis} for a concise proof of the Kac--Ward formula). 
\end{remark}

We are now ready to prove our first two main results.
\begin{proof}[Proof of Theorem~\ref{thm:positivity}]
It is a direct consequence of Lemma~\ref{lem:tposflow}. The interpretation of empty paths as counterclockwise or 
clockwise loops follows from the way the sources and sinks are placed around the external face 
(see Section~\ref{sec:flows}).
\end{proof}

\begin{proof}[Proof of Corollary~\ref{cor:totpos}]
Note that every square submatrix of $M^{A,B}$ satisfies the assumptions of Corollary~\ref{cor:totpos}. Hence, it is enough to prove that $\det M^{A,B} \geq 0$, 
and $\det M^{A,B} > 0$ if and only if there are $k$ vertex disjoint paths connecting $A$ and $B$. To this end, note that by~\eqref{eq:nicesign}, \eqref{eq:nicesign1} and Lemma~\ref{lem:tposflow}, 
\[
\det M^{A,B} = \det N^{A,B} = \frac{Z^{A,B}_{\aflow}}{Z^{\emptyset}_{\aflow}} \geq 0.
\]
Furthermore, by Lemma~\ref{lem:flowprop} and~\ref{lem:revflowprop},
$Z^{A,B}_{\aflow} >0$ if and only if there are $k$ vertex disjoint paths connecting $A$ and $B$. Indeed,  in a flow $F \in \flows_{A,B}$ there cannot be a connected component
that contains more than $2$ vertices in $A\cup B$ since then the alternating condition from Lemma~\ref{lem:flowprop} would be violated. On the other hand, if there exists
$k$ vertex disjoint paths connecting $A$ and $B$, then by Lemma~\ref{lem:revflowprop}, there exists a flow in $\flows_{A,B}$ which maps to these paths under~$\setf$.
\end{proof}

Before proving the rest of our main results, we will need a classical Pfaffian formula of Groeneveld, Boel and Kasteleyn~\cite{GBK}. 
We present here a different proof involving the connection between double random currents and alternating flows.
For yet another proof involving the dimer model, see the recent treatment of the combinatorics of the planar Ising model~\cite{CCK}.

\begin{lemma}\cite[Theorem A]{GBK} \label{lem:gbk} Let $K^{A \cup B}$ be as in Theorem~\ref{thm:pairing}. We have
\[
\pf {K}^{A \cup B}=  \frac{Z^{A\cup B}_{\rcur}}{Z^{\emptyset}_{\rcur}}.
\]
\end{lemma}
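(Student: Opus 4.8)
The plan is to exploit the same double-current/alternating-flow dictionary that gave Lemma~\ref{lem:flowtwopoint}, but now applied to the full boundary set $A \cup B$ rather than to a single pair $\{a,b\}$. First I would write the Pfaffian expansion \eqref{eq:nicesign2}, $\pf K^{A\cup B} = \sum_{\pi} (-1)^{\textnormal{xing}(\pi)}\prod_{\{u,v\}\in\pi} \langle \sigma_u\sigma_v\rangle$, where $\pi$ ranges over pairings of the $2k$ boundary points placed in counterclockwise order. Since $A$ and $B$ together have $2k$ points, and since in the setup of Theorem~\ref{thm:pairing} the relevant alternating-flow boundary conditions have source set and sink set both equal to suitable halves, I would instead view $\pf K^{A\cup B}$ directly through Lemma~\ref{lem:tposflow}: taking $A$ in the sense of Theorem~\ref{thm:positivity} to be the first $k$ of the $v_i$ and $B$ the last $k$ (with the $W_\circ/W_\bullet$ partition chosen so that the sign $(-1)^{s(i,j)}$ matches $(-1)^{\textnormal{xing}}$ as in the Remark), the Pfaffian of the skew-symmetric $2k\times 2k$ matrix of two-point functions equals, up to the combinatorial identification of pairings with alternating flows, the ratio $Z^{A\cup B}_{\aflow}/Z^\emptyset_{\aflow}$ of alternating-flow partition functions on $\vec G$ with all $2k$ boundary vertices used.

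The heart of the argument is then to compare $Z^{A\cup B}_{\aflow}/Z^\emptyset_{\aflow}$ with $Z^{A\cup B}_{\rcur}/Z^\emptyset_{\rcur}$, exactly as in the proof of Lemma~\ref{lem:flowtwopoint}, but keeping careful track of the power of $2$. Concretely, Theorem~\ref{thm:flowformula} gives, for $\set$ in the image of $\flows_{A,B}$,
\[
\overline{\IP}^{A,B}_{\aflow}(\set) = \frac{2^{|A| - k'(\set)}\,|\mathcal{E}_\emptyset(\set)|\,\prod_{e\in\set_1}x_e\prod_{e\in\set_2}x_e^2\prod_{e\notin\set}(1-x_e^2)}{\bar Z^{A,B}_{\aflow}},
\]
while Theorem~\ref{thm:dcurrformula} gives the same expression without the factor $2^{|A|-k'(\set)}$ and with normalization $\bar Z^{A\cup B}_{\drcur}$. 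The key point, supplied by Lemma~\ref{lem:flowprop} and Lemma~\ref{lem:revflowprop}, is that when one uses all of $A\cup B$ as sources/sinks, every alternating flow decomposes into exactly $k$ connected components each joining one source to one sink, so $k'(\set) = k = |A|$ for every relevant $\set$, hence $2^{|A|-k'(\set)} = 1$ identically. Therefore the induced measures on $\sets_{A\cup B}$ from double currents and alternating flows coincide on the common support, and the support is the same by the interlacing criterion of Lemma~\ref{lem:revflowprop}; matching normalizations then yields $Z^{A,B}_{\aflow}/Z^\emptyset_{\aflow} = Z^{A\cup B}_{\rcur}Z^\emptyset_{\rcur}/(Z^\emptyset_{\rcur})^2 = Z^{A\cup B}_{\rcur}/Z^\emptyset_{\rcur}$, where I also use Lemma~\ref{lem:currtwopoint}-type cancellations and the fact that $\bar Z$'s differ from $Z$'s only by the common edge factor $\prod_e(1-x_e^2)$.

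I would then close the loop: by Lemma~\ref{lem:tposflow} the left side $Z^{A,B}_{\aflow}/Z^\emptyset_{\aflow}$ equals $\det N^{A,B}$ with $A,B$ the two halves, and by the Remark following Corollary~\ref{cor:totpos} this determinant, for a boundary configuration where $A$ and $B$ interleave pairwise around the circle, is precisely $\pf K^{A\cup B}$ via \eqref{eq:nicesign1} and \eqref{eq:nicesign2} (the crossing numbers of pairings of $2k$ cyclically ordered points being exactly the signs in the Pfaffian expansion). Combining gives $\pf K^{A\cup B} = Z^{A\cup B}_{\rcur}/Z^\emptyset_{\rcur}$. The main obstacle I anticipate is purely bookkeeping: one must check that the sign conventions ($\textnormal{xing}$ versus $s(i,j)$, clockwise versus counterclockwise orderings, the placement of $w^\source$ relative to $w^\sink$) are consistent so that the Pfaffian expansion over pairings of $A\cup B$ literally matches the alternating-flow sum, and that the choice of $W_\circ, W_\bullet$ used to realize $K^{A\cup B}$ as an $N^{A,B}$ is admissible; once the $2^{|A|-k'}=1$ observation is in place, the analytic content is immediate.
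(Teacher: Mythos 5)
There is a genuine gap, and it is at the crux of the argument. You claim that, by choosing $A$ and $B$ to be the two halves of $A\cup B$, one has $\pf K^{A\cup B}=\det N^{A,B}$. This is false for $k\ge 2$: by \eqref{eq:nicesign1} the determinant $\det N^{A,B}$ sums over \emph{bijections} $\pi\colon A\to B$, whereas by \eqref{eq:nicesign2} the Pfaffian sums over \emph{all} pairings of $A\cup B$, including those that pair two elements of $A$ (or two of $B$) together. For $k=2$ with $a_1,a_2,b_2,b_1$ counterclockwise one gets $\det M^{A,B}=\langle\sigma_{a_1}\sigma_{b_1}\rangle\langle\sigma_{a_2}\sigma_{b_2}\rangle-\langle\sigma_{a_1}\sigma_{b_2}\rangle\langle\sigma_{a_2}\sigma_{b_1}\rangle$, while $\pf K^{A\cup B}$ carries the additional term $+\langle\sigma_{a_1}\sigma_{a_2}\rangle\langle\sigma_{b_1}\sigma_{b_2}\rangle$. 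The same mismatch shows up on the flow side: $\overline{\IP}^{A,B}_{\aflow}$ is supported on $\sets_{A,B}$, which by the interlacing property of Lemma~\ref{lem:flowprop} is a \emph{proper} subset of $\sets_{A\cup B}$ (configurations with a component joining $a_1$ to $a_2$ are in $\sets_{A\cup B}$ but not in $\sets_{A,B}$), so the two induced measures do not have the same support and the partition functions cannot be compared directly. Your observation that $k'(\set)=|A|$ on the support of $\overline{\IP}^{A,B}_{\aflow}$ is correct but is really the ingredient for Theorem~\ref{thm:pairing}, not for this lemma.

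What is missing is an averaging step. The paper's proof works precisely because it sums $\det N^{A',B'}$ over \emph{all} decompositions $A'\sqcup B'=A\cup B$ with $|A'|=|A|$: each pairing of $A\cup B$ then appears in exactly $2^{|A|}$ of these determinant expansions, which reproduces $\pf K^{A\cup B}$ after dividing by $2^{|A|}$. On the flow side the corresponding statement is that each $\set\in\sets_{A\cup B}$ lies in $\sets_{A',B'}$ for exactly $2^{k'(\set)}$ choices of $A'$ (Lemmas~\ref{lem:flowprop} and~\ref{lem:revflowprop}), and the $2^{|A|-k'(\set)}$ weight in Theorem~\ref{thm:flowformula} is tailored to cancel this multiplicity against the $2^{-|A|}$ normalization. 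Without this sum over all source/sink splits, the identity you aim for is simply not true, and no choice of $W_\circ$, $W_\bullet$, or sign convention can repair it, since the discrepancy is in the set of pairings being counted, not in the signs.
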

\begin{proof}
By \eqref{eq:nicesign1}, for any $A' \subset A\cup B$ with $|A'|=|A|$ and $B'=(A\cup B)\setminus A'$, we have
\[
\det N^{A',B'} = \sum_{\pi: \textnormal{ bijection } A' \to B'} (-1)^{\textnormal{xing}(\pi)} \prod_{\{a,b\} \in \pi}\langle \sigma_{a} \sigma_{b} \rangle
\]
Recall that a bijection between $ A' $ and $ B'$ can be thought of as a pairing of $A\cup B$, i.e., a partition of $A\cup B$ into pairs $\{ a, \pi (a)\}$, $a\in A'$.
Note that each pairing $\pi$ of $A\cup B$ appears in a sum as above for exactly $2^{|A|}$ choices of $A'$. Indeed, for each of the $|A|$ pairs in $\pi$, we 
can choose one vertex that will belong to $A'$. Hence, by \eqref{eq:nicesign2} and by Lemma~\ref{lem:tposflow}, we have
\begin{align} \label{eq:kasteleyn}
\pf {K}^{A \cup B} =2^{-|A|} \sum_{A', B'}\det N^{A',B'}=   \frac{2^{-|A|}}{Z^{\emptyset}_{\aflow}} \sum_{A', B'} Z^{A',B'}_{\aflow}.
\end{align}

Recall that $\sets_{A',B'} \subset \sets_{A\cup B}$ is the image of $\flows_{A',B'}$ under the map $\setm$ from Section~\ref{sec:flows}.
We claim that each $\set \in \sets_{A\cup B}$ belongs to $\sets_{A',B'}$ for exactly $2^{k'(\set)}$ choices of $A'$, where $k'(\set)$ is the number of connected components of $\set$ 
containing a vertex in $A\cup B$. Indeed,
by Lemma~\ref{lem:flowprop} and~\ref{lem:revflowprop}, 
for each connected component $\ccomp$ of $\set$ that intersects $A\cup B$, there are exactly two ways of distributing the vertices in $V(\ccomp) \cap (A \cup B)$
between $A'$ and $B'$ (choosing one vertex to be connected to a source or a sink 
fixes the choices for all other vertices in $V(\ccomp) \cap (A \cup B)$ by the alternating property). 
Hence, by combining Theorem~\ref{thm:dcurrformula} and~\ref{thm:flowformula}, we have
\begin{align*}
\eqref{eq:kasteleyn} &= \frac{1}{Z^{\emptyset}_{\aflow}}\sum_{A', B'}  
 \sum_{\set \in \sets_{A', B'}}2^{-k'(\set)}   |\mathcal{E}_{\emptyset}(\set)| \prod_{e\in \set_1} \frac{x_e}{1-x^2_e}  \prod_{e\in \set_2} \frac{x^2_e}{1-x^2_e} \\
& =  \frac{1}{Z^{\emptyset}_{\aflow}}  \sum_{\set \in \sets_{A\cup B}}  2^{k'(\set)-k'(\set)}
  |\mathcal{E}_{\emptyset}(\set)| \prod_{e\in \set_1} \frac{x_e}{1-x^2_e}  \prod_{e\in \set_2} \frac{x^2_e}{1-x^2_e} \\
& =  \frac{Z^{A \cup B}_{\rcur} Z^{\emptyset}_{\rcur} }{Z^{\emptyset}_{\aflow}} \\ 
& =\frac{Z^{A \cup B}_{\rcur} }{Z^{\emptyset}_{\rcur}},
\end{align*}
where in the last equality we used Corollary~\ref{cor:empty} to get $Z^{\emptyset}_{\aflow}=(Z^{\emptyset}_{\rcur})^2$.
\end{proof}

We are now in a position to prove the rest of our main results.
\begin{proof}[Proof of Theorem~\ref{thm:pairing}]
Recall that $A$ and $B$ are placed around the outer face in such a way that all vertices of $A$ come before every vertex of $B$
in a counterclockwise order.
Let $\overline{ \para}_{A , B} \subseteq \sets_{A \cup B}$ be the image of the event
$\para_{A , B}$ under the map from Section~\ref{sec:currents}. 
Comparing the formulas in Theorem~\ref{thm:dcurrformula} and~\ref{thm:flowformula},
we have
\begin{align}
\IP_{\drcur}^{A\cup B} (\para_{A , B} )  &= \overline{ \IP}_{\drcur}^{A\cup B}  (\overline{ \para}_{A , B}) \nonumber \\&
 = \sum_{\set\in \overline{ \para}_{A , B}} \overline{ \IP}_{\drcur}^{A\cup B}  (\set)\nonumber  \\
&= \frac{Z^{A,B}_{\aflow}}{Z^{A\cup B}_{\rcur} Z^{\emptyset}_{\rcur}} \sum_{\set \in 
\overline{ \para}_{A , B}}  2^{k'(\set) -|A|}\overline{ \IP}_{\aflow}^{A, B}  (\set)\nonumber \\
& = \frac{ Z^{A,B}_{\aflow}}{Z^{A\cup B}_{\rcur} Z^{\emptyset}_{\rcur}} \sum_{\set \in\overline{ \para}_{A , B}} \overline{ \IP}_{\aflow}^{A, B}  (\set) \nonumber \\
& = \frac{ Z^{A,B}_{\aflow}}{Z^{A\cup B}_{\rcur} Z^{\emptyset}_{\rcur}} , \label{eq:parallel}
\end{align}
where the second to last equality holds true since, by the definition of $\para_{A , B}$, $k'(\set) = |A|$ for every $\set \in \overline{ \para}_{A , B}$. 
Moreover, since $\overline{ \IP}_{\aflow}^{A, B}$ is supported on $\sets_{A,B}$, to justify the last equality we have to show that $\overline{ \para}_{A , B} =\sets_{A,B} $.
To this end, note that by Lemma~\ref{lem:flowprop}, for each $F\in \flows_{A,B}$, we have that $\setm(F) \in \overline{ \para}_{A , B}$,
where $\setm$ denotes the map from Section~\ref{sec:flows}. 
Indeed, for contiguous sets $A$ and $B$, there is only one way of distributing the sources $A^{\source}$ and sinks $B^{\sink}$ 
between the connected components of an alternating flow in such a way that they alternate around the external face for each component. This means
that each connected component of $F$ connects a single point in $A$ to a single point in $B$.
On the other hand, by Lemma~\ref{lem:revflowprop}, for each $\set \in \overline{ \para}_{A , B}$,
there exists $F \in \flows_{A,B}$ such that $\setm(F) =\set$. Therefore, $\overline{ \para}_{A , B}=\sets_{A,B} $ and \eqref{eq:parallel} holds true.
Furthermore, using Lemma~\ref{lem:talaska} and~\ref{lem:gbk}, we get
\begin{align*} 
\frac{\det M^{A,B}  }{\pf {K}^{A \cup B}} = \frac{Z^{A,B}_{\aflow}Z^{\emptyset}_{\rcur}}{Z^{\emptyset}_{\aflow}Z^{A\cup B}_{\rcur} } = \frac{ Z^{A,B}_{\aflow}}{Z^{A\cup B}_{\rcur}Z^{\emptyset}_{\rcur}},
\end{align*}
where we used Corollary~\ref{cor:empty} to obtain that $Z^{\emptyset}_{\aflow}=(Z^{\emptyset}_{\rcur})^2$.
Together with \eqref{eq:parallel}, this finishes the proof.
\end{proof}

\begin{proof}[Proof of Theorem~\ref{thm:scalinglimit}]
Hongler in his PhD thesis~\cite{hongler} showed that in the setting of Theorem~\ref{thm:scalinglimit}, for any $a \in  A$ and $b \in B$, the following limit exists
\begin{align} \label{eq:hongler1}
f_{a,b}(D) :=\lim_{\epsilon \to 0}  \epsilon^{-1} \langle \sigma_{a_{\epsilon}} \sigma_{b_{\epsilon}}  \rangle,
\end{align}
where $a_{\epsilon}$ and $b_{\epsilon}$ are vertices on the outer face of $D_{\epsilon}$ approximating $a$ and $b$. Moreover, the limit is conformally covariant, i.e.,
for a domain $D'$ and a conformal equivalence $\varphi: D \to D'$, we have 
\begin{align} \label{eq:hongler2}
f_{a,b}(D) =   f_{\varphi(a),\varphi(b)}(D')  |\varphi'(a)\varphi'(b)|^{\frac12}.
\end{align}
Furthermore, for real numbers $a,b$, we have 
\[
f_{a,b}(\mathbb{H}) = \frac{2(\sqrt{2}+1)}{\pi |a-b| }.
\] 
The result now follows from Theorem~\ref{thm:pairing} since, by the expansions of the determinant and Pfaffian \eqref{eq:nicesign} and \eqref{eq:nicesign2},
the normalization factors from \eqref{eq:hongler1},
the constants ${2(\sqrt{2}+1)}/{\pi}$, and the terms containing derivatives of $\varphi$ from \eqref{eq:hongler2} cancel out.
\end{proof}

\noindent{\bf Acknowledgments.} The author thanks the anonymous referees for very useful suggestions.
This research was supported by the Knut and Alice Wallenberg foundation, and was conducted when the author was at Chalmers University of Technology and the University of Gothenburg.
\bibliographystyle{amsplain}
\bibliography{TotalPositivity1}
\end{document}